\tikzstyle{legend_general}=[rectangle, rounded corners, thin,
\newtheorem{theorem}{Theorem}[section]
\newtheorem{lemma}[theorem]{Lemma}
\theoremstyle{definition}
\newtheorem{definition}[theorem]{Definition}
\newtheorem{remark}[theorem]{Remark}
\title{From one to many rainbow Hamiltonian cycles}
\author{Peter Bradshaw}
\address{Department of Mathematics, Simon Fraser University, Vancouver, Canada}
\email{pabradsh@sfu.ca}
\author{Kevin Halasz}
\address{Department of Mathematics, Simon Fraser University, Vancouver, Canada}
\email{khalasz@sfu.ca}
\author{Ladislav Stacho}
\address{Department of Mathematics, Simon Fraser University, Vancouver, Canada}
\email{lstacho@math.sfu.ca}
\begin{document}
\begin{abstract}
Given a graph $G$ and a family $\mathcal{G} = \{G_1,\ldots,G_n\}$ of subgraphs of $G$, a transversal of $\mathcal{G}$ is a pair $(T,\phi)$ such that $T \subseteq E(G)$ and $\phi: T \rightarrow [n]$ is a bijection satisfying $e \in G_{\phi(e)}$ for each $e \in T$. We call a transversal Hamiltonian if $T$ corresponds to the edge set of a Hamiltonian cycle in $G$. We show that, under certain conditions on the maximum degree of $G$ and the minimum degrees of the $G_i \in \mathcal{G}$, for every $\mathcal{G}$ which contains a Hamiltonian transversal, the number of Hamiltonian transversals contained in $\mathcal{G}$ is bounded below by a function of $G$'s maximum degree. This generalizes a theorem of Thomassen stating that, for $m \geq 300$, no $m$-regular graph is uniquely Hamiltonian. We also extend Joos and Kim's recent result that, if $G = K_n$ and each $G_i \in \mathcal{G}$ has minimum degree at least $\frac{n}{2}$, then $\mathcal{G}$ has a Hamiltonian transversal: we show that, in this setting, $\mathcal{G}$ has exponentially many Hamiltonian transversals. Finally, we prove analogues of both of these theorems for transversals which form perfect matchings of $G$. 
\end{abstract}
\maketitle
\section{Introduction}

Suppose we have a graph $G$ and a family $\mathcal G = \{G_1, \dots, G_s\}$ of (not necessarily distinct) subgraphs of $G$. Then
we define a $\mathcal G$-\emph{transversal}  as a pair $(T,\phi)$ where $T \subseteq E(G)$ and $\phi: T \rightarrow [s]$  is a  bijection satisfying $e \in G_{\phi(e)}$ for each $e \in T$. Informally, a $\mathcal G$-transversal  is a set of edges that uses exactly one edge from each subgraph $G_i \in \mathcal G$
or, in other words, a system of distinct representatives for the collection $\mathcal G$. One may also think of a $\mathcal G$-transversal by giving the edges of each graph in $\mathcal G$ a single color distinct from the colors used on the other graphs and then defining a $\mathcal G$-transversal as a set of edges in which each color appears exactly once. In this sense, the notion of a $\mathcal G$-transversal is stronger than the well-studied notion of a \emph{rainbow subgraph}, which would correspond to a collection using at most one edge from each graph $G_i \in \mathcal G$.

The formal definition of a $\mathcal G$-transversal was introduced by Joos and Kim in \cite{JoosKim2020}, though the idea appeared implicitly in earlier work on 
rainbow triangles \cite{Aharoni,Magnant2015} and rainbow perfect matchings (see e.g. \cite{Aharonietal2017,BaratWankess2014,Pokrovskiy2018}). The main result of \cite{JoosKim2020} was a generalization of Dirac's theorem \cite{Dirac}, a classical result giving a sufficient condition for the existence of a Hamiltonian cycle in a graph. Joos and Kim proved
that if $G = K_n$ and $\mathcal G = \{G_1,G_2,\ldots,G_n\}$ satisfies $\delta(G_i) \geq \frac{n}{2}$ for each $i \in [n]$, then there exists a \emph{Hamiltonian} $\mathcal G$-transversal, i.e.\! a $\mathcal G$-transversal whose constituent edges form a Hamiltonian cycle on $V(G)$. In a similar fashion, the first author showed in \cite{BradshawBipancyclicity} that a similar condition of Moon and Moser \cite{Moon} for Hamiltonicity in bipartite graphs can be generalized into the setting of transversals.

This paper continues the project of generalizing results related to Hamiltonicity into the language of graph transversals. In \cite{Thomassen}, Thomassen proves the following theorem.

\begin{theorem}
\label{thmThomassen}
If $G$ is a Hamiltonian $m$-regular graph, where $m \geq 300$, then $G$ contains at least two Hamiltonian cycles.
\end{theorem}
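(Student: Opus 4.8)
The plan is to argue by contradiction. Suppose $G$ is $m$-regular, Hamiltonian, and has a \emph{unique} Hamiltonian cycle $C = v_1 v_2 \cdots v_n v_1$, with $m \ge 300$. Since $G$ is $m$-regular, every vertex $v_i$ is incident to exactly $m - 2 \ge 298$ \emph{chords} of $C$, so $G$ is very far from being just the cycle $C$; the goal is to convert this density of chords into a second Hamiltonian cycle, contradicting uniqueness.

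The first step is to catalogue ``swap'' moves, each of which manufactures a Hamiltonian cycle different from $C$, so that uniqueness becomes a long list of forbidden chord configurations. The simplest such move uses a pair of \emph{parallel chords}: if $v_a v_b$ and $v_{a+1} v_{b+1}$ are both chords (indices modulo $n$), then deleting the $C$-edges $v_a v_{a+1}$, $v_b v_{b+1}$ and inserting these two chords yields the Hamiltonian cycle $v_1 \cdots v_a v_b v_{b-1} \cdots v_{a+1} v_{b+1} v_{b+2} \cdots v_n v_1$, which differs from $C$; hence no pair of parallel chords exists. Iterating and combining such moves, and more generally running the rotation--extension (Thomason lollipop) procedure --- delete an edge $uv$ of $C$ to obtain a Hamiltonian path, repeatedly rotate at one endpoint along a chord, and close up whenever the moving endpoint becomes adjacent off the path to the fixed endpoint --- produces an ever-growing family of forbidden configurations that the chords of $G$ must simultaneously avoid. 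If $m$ is odd this already suffices: by Thomason's theorem, every edge of a graph all of whose degrees are odd lies in an even number of Hamiltonian cycles, so $C$ cannot be the only one. The substance therefore lies in the even case, though the machinery is uniform.

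The core of the argument, and the step I expect to be the main obstacle, is the even case, where the parity trick collapses. Here the idea is to introduce an auxiliary \emph{conflict graph} $H$ on the vertices (or $C$-segments) of $G$, joining two potential swap sites when performing swaps at both would interfere (they share a vertex, they ``cross'', or the combined swap would reconstruct $C$). Because $m$ is large, while the uniquely-Hamiltonian hypothesis forces strong restrictions on how chords can lie relative to $C$, one can bound the relevant degrees in $H$; so $H$ has an independent dominating set $I$ which, being maximal, contains a positive fraction of the vertices. The many pairwise non-adjacent --- hence non-interfering --- swap sites in $I$ can then be assembled into a single spanning cycle genuinely different from $C$, the desired contradiction. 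The delicate points, where the constant $300$ is actually spent, are (i) showing that the union of all forbidden position-blocks arising from the rotation moves still cannot cover all of $C$, so that a usable swap site survives, and (ii) certifying that the combined swap remains connected and does not accidentally recreate $C$. With generous constants these estimates are routine; pushing the bound down to $m \ge 300$ requires the tight counting that forms the technical heart of the proof.
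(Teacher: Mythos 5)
Your write-up is a plan rather than a proof, and the two ingredients that carry all the weight in Thomassen's actual argument are absent from it. The paper explains the structure: colour the edges of the Hamiltonian cycle $C$ red and all other edges of $G$ green; invoke Thomassen's earlier lemma that a vertex set $S$ which is independent in the red graph and dominating in the green graph forces a second Hamiltonian cycle (this is the nontrivial combinatorial step, proved via a lollipop-style parity argument in an auxiliary graph, and it is exactly what the paper's Lemma~\ref{lemmaIndDom} generalizes); then apply the Lov\'asz Local Lemma to show that such a set $S$ exists whenever $m \ge 300$. Neither the red-independent/green-dominating lemma nor the Local Lemma appears anywhere in your proposal, and both are indispensable.

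The specific gap is at the sentence ``the many pairwise non-adjacent --- hence non-interfering --- swap sites in $I$ can then be assembled into a single spanning cycle genuinely different from $C$.'' This is asserted, not argued, and it is not true in the naive sense: performing several ``parallel-chord'' swaps simultaneously, even at vertex-disjoint, non-crossing sites, generally yields a $2$-factor with several components rather than a single Hamiltonian cycle. Avoiding this requires precisely the dominating-set structure and the parity/lollipop argument that Thomassen supplies. Likewise, replacing the Local Lemma by the bare observation that a maximal independent set in your conflict graph is dominating does not give the quantitative control needed to make the constant $300$ work --- a maximal independent set need not have the degree-lower-bound properties the argument requires, and in the paper's setting the whole point of the LLL is to certify a set that is simultaneously sparse in $C$ and dense enough in the chord graph at every vertex. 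Your opening observations (every vertex has $m-2$ chords, a parallel-chord pair yields a second cycle, the odd-$m$ case follows from Thomason's parity theorem) are correct, but the even case, which you correctly identify as the substance, is not proved: ``with generous constants these estimates are routine'' is where the theorem actually lives, and nothing is done there.
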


Thomassen proves Theorem \ref{thmThomassen} by 2-coloring the edges of $G$, using red to color the edges of a Hamiltonian cycle $C$ and using green to color all other edges. Thomassen had previously shown in \cite{ThomassenRedGreen} that if $G$ has a vertex subset that is independent with respect to the red edges and dominating with respect to the green edges, then $G$ must contain a second Hamiltonian cycle. To complete the proof of Theorem \ref{thmThomassen}, Thomassen uses the Lov\'asz Local Lemma \cite{LLL} to show that a red-independent and green-dominating vertex subset of $G$ must exist under the given hypotheses. Using Thomassen's ideas, we will prove a generalization of Theorem \ref{thmThomassen} for graph transversals.

\begin{theorem}
Let $G$ be an $n$-vertex graph of maximum degree $m$, where $m \geq 262$, and let $\mathcal{G}~=~\{G_1, \dots, G_n\}$ be a family of subgraphs of $G$, each of minimum degree at least $7\sqrt{m \log m}$. If $\mathcal G$ contains a Hamiltonian transversal, 
then $\mathcal G$ contains at least $\left \lceil \frac{1}{60}\log m \right \rceil!$ distinct Hamiltonian transversals.
\label{thmMain}
\end{theorem}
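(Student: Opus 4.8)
The plan is to follow Thomassen's strategy, but carried out inside the transversal setting and iterated to produce many transversals rather than just one.  Suppose $\mathcal G = \{G_1,\dots,G_n\}$ contains a Hamiltonian transversal $(T,\phi)$, where $T$ is the edge set of a Hamiltonian cycle $C$.  As in Thomassen's proof, I would think of the edges of $C$ as ``red'' and all other edges of $G$ as ``green'', and aim to find a vertex subset $S\subseteq V(G)$ that is independent in the red graph (the cycle $C$) and dominating in the green graph.  The key input is a transversal analogue of Thomassen's red/green lemma from \cite{ThomassenRedGreen}: given such a set $S$ together with the colour classes coming from $\phi$, one should be able to ``rotate'' $C$ across $S$ to obtain a \emph{new} Hamiltonian cycle $C'$ that still admits a system of distinct representatives from $\mathcal G$ — the point being that the edges leaving $C$ when we reroute can be recoloured using the green edges at $S$, and the large minimum degree $\delta(G_i)\ge 7\sqrt{m\log m}$ guarantees enough freedom to re-extend $\phi$ to a bijection on the new edge set.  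I would isolate this as a standalone lemma: if $\mathcal G$ has a Hamiltonian transversal on $C$ and $S$ is red-independent and green-dominating with respect to $C$, then $\mathcal G$ has a second, distinct Hamiltonian transversal.

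Next I would establish existence of the set $S$ via the Lov\'asz Local Lemma, exactly along Thomassen's lines but tracking the dependence on $m$ carefully enough to get the explicit constant $262$: colour each vertex of $G$ independently with probability $p$ chosen of order $\tfrac{\log m}{m}$, let $A_v$ be the bad event that $v$ has a neighbour in $S$ along $C$ (a red edge inside $S$) and $B_v$ the bad event that $v$ is not green-dominated by $S$.  Each event depends only on vertices within bounded distance, so the dependency degree is polynomial in $m$; the probability of $A_v$ is $O(p)$ and of $B_v$ is $(1-p)^{\deg_G(v)-O(1)}$, and since every vertex has green-degree at least $m - 2 \geq \deg_C(v)$-ish (more precisely, because $\delta(G_i)$ is large we can ensure the relevant green neighbourhoods are large), the symmetric LLL condition holds once $m\ge 262$.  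The $7\sqrt{m\log m}$ hypothesis on the $G_i$ is what lets the recolouring step in the lemma go through simultaneously with this choice of $p$.

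To get \emph{many} transversals rather than two, I would iterate.  Having produced $C'\ne C$, I again look for a red-independent, green-dominating set, but now I must control the new cycle: the two applications of the rerouting should be made along disjoint ``windows'' of the LLL-sampled set $S$ so that the resulting cycles are pairwise distinct.  Concretely, I expect to partition a single good set $S$ (or take several independent samples) into roughly $\tfrac{1}{60}\log m$ blocks and perform the rotation independently within each block; the different choices of which blocks to use, together with the factorial number of ways of re-indexing the freed-up colours among the newly inserted edges (again using the slack from $\delta(G_i)\ge 7\sqrt{m\log m}$), should yield $\lceil \tfrac1{60}\log m\rceil!$ distinct Hamiltonian transversals.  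The arithmetic of how large $m$ must be for both the LLL bound and the block decomposition to coexist is where the constants $262$ and $\tfrac{1}{60}$ come from.

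The main obstacle I anticipate is the transversal version of Thomassen's red/green lemma: in the ordinary graph setting one only needs the rerouted edge set to form a Hamiltonian cycle, but here one must simultaneously re-extend the bijection $\phi$ to the new edge set, so the rerouting must be done in a way that never ``uses up'' a colour that is needed elsewhere.  Making this compatible — essentially, solving a small system-of-distinct-representatives/Hall-type problem on the edges that change, using the minimum-degree bound on the $G_i$ to verify Hall's condition — and doing so robustly enough to survive the iteration into many disjoint blocks, is the technical heart of the argument; everything else is a careful but routine adaptation of the probabilistic computation in \cite{Thomassen}.
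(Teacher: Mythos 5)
Your high-level outline matches the paper's architecture (find a red-independent dominating set via the Lov\'asz Local Lemma, use it to reroute the given transversal cycle), and you correctly flag the central obstacle: the bijection $\phi$ must be re-extended after the reroute.  But the two places where you leave the argument as a sketch are exactly where the paper deviates from your plan, and in the second of these your proposed route has a real gap.

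First, the colour-bookkeeping.  You propose keeping Thomassen's red/green dichotomy and then solving a Hall-type system-of-distinct-representatives problem on the edges that change, using $\delta(G_i)\ge 7\sqrt{m\log m}$ to verify Hall's condition.  The paper avoids this entirely by refining the colouring.  It builds a red--yellow--blue \emph{digraph} $H_{\mathcal G,C,\phi}$ in which a yellow arc from $x_i$ to $x_j$ is present precisely when $x_ix_j\in E(G_i)$, and a blue arc from $x_i$ to $x_j$ precisely when $x_ix_j\in E(G_{i-1})$.  So each arc ``knows'' the unique colour it is allowed to receive, and the reroute (a variant of Thomason's lollipop argument applied to this digraph, Lemma~3.3) produces a second Hamiltonian cycle whose recolouring is read off directly from the arc colours with no residual matching problem to solve.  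Your Hall-type step would likely work but is harder to control; the RYB encoding is the cleaner device and you should look for something like it.

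Second, and more seriously, the iteration.  You propose partitioning the dominating set $S$ into roughly $\lceil\tfrac{1}{60}\log m\rceil$ blocks and rerouting ``independently within each block.''  Two problems: (i) independent binary choices over $k$ disjoint blocks give at most $2^k$ outcomes, not $k!$, so even if this could be made to work it would not deliver the claimed bound without a further idea; (ii) the lollipop/rotation argument that produces the second cycle is not local to a block --- it rewires how the $|S|$ paths of $C[V\setminus S]$ are chained together, so it cannot be decomposed into independent per-block operations.  The paper instead proves (Theorem~3.4) that if $d^*_{C,\phi}(S)\ge d$ then there are at least $(d+1)!$ transversals, by induction on $d$: it exhibits a vertex $v_0\in N_C(S)$ such that \emph{every} admissible edge from $v_0$ into $S$ extends to a full Hamiltonian transversal in a carefully defined class $\Omega(C,\phi,S)$; there are at least $d+1$ such edges, and for each choice one passes to a new transversal $(C',\psi)$, deletes one vertex from $S$, shows the degree parameter $d^*$ drops by at most one (this uses the invariance $d^*_{C,\phi}(S)=d^*_{C',\psi}(S)$ for $(C',\psi)\in\Omega(C,\phi,S)$), and inducts to get $d!$ more.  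Disjointness of the $d+1$ families is seen locally at $v_0$.  This inductive structure --- one fixed vertex, $d+1$ branches, shrink $S$ by one, recurse --- is the missing idea, and the invariant $d^*$ together with the set $\Omega(C,\phi,S)$ is what makes the recursion sound.  Your LLL step is in the right spirit (the paper's Theorem~3.6 does essentially what you describe, with $p$ of order $\sqrt{\log m/m}$), but without the inductive counting the bound $\lceil\tfrac{1}{60}\log m\rceil!$ does not follow.
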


The method that we use to prove Theorem \ref{thmMain} may also be used to show that, given any $\epsilon>0$, there is a value $m_0$ such that for $m>m_0$, a minimum degree of $(6 + \epsilon) \sqrt{m \log m}$ is enough to guarantee the existence of many Hamiltonian transversals.

When $m \geq 300$, the minimum degree $t$ required in Theorem \ref{thmMain} is less than $m$. Therefore, Theorem \ref{thmThomassen} may be obtained from Theorem \ref{thmMain} by considering an $m$-regular graph $G$ and letting each graph of $\mathcal{G}$ be equal to $G$. Furthermore, in \cite{Horak}, Hor\'ak and the third author showed that in Thomassen's proof of Theorem \ref{thmThomassen}, it is not necessary for $G$ to be entirely regular and that Thomassen's method allows some gap between the minimum and maximum degree of $G$.  Theorem \ref{thmMain} also generalizes the main result of \cite{Horak}. More recently, Haxell, Seamone, and Verstraete \cite{HSV07} extended Theorem \ref{thmThomassen} to the case $m \geq 23$, while noting that Thomassen's methods could in fact work for any $m \geq 73$. It is unclear whether Theorem \ref{thmMain} could be extended to similarly small values of $m$. 

For Dirac graphs (that is, graphs on $n$ vertices whose minimum degree is at least $\frac{n}{2}$), we establish an even stronger result. In \cite{JoosKim2020}, Joos and Kim prove that given a family $\mathcal G$ of $n$ Dirac graphs on a common set of $n$ vertices, $\mathcal G$ must contain a Hamiltonian transversal. By applying Thomassen's methods from \cite{ThomassenRedGreen} and \cite{Thomassen}, we strengthen the result of Joos and Kim.

\begin{theorem} 
 For every $\epsilon>0$ and every $c \geq \frac{1}{2}$, there exists an $n_0$ such that, for all $n \geq n_0$, a family $\mathcal{G} = \{G_1, \dots, G_n\}$ of subgraphs of $K_n$ in which each $G_i$ has minimum degree at least $cn$ contains at least $\left\lceil \frac{c^2 n}{16+\epsilon}  \right\rceil!$ Hamiltonian transversals.
\label{thmDiracHam}
\end{theorem}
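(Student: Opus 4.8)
The plan is to bootstrap from a single Hamiltonian transversal to many of them by local surgery on the corresponding Hamiltonian cycle, following the red/green philosophy of Thomassen. The hypothesis $c \geq \frac12$ forces $\delta(G_i) \geq \frac n2$ for every $i$, so the theorem of Joos and Kim supplies at least one Hamiltonian transversal $(T_0,\phi_0)$; let $C$ be the corresponding Hamiltonian cycle, colour its edges red and every other edge green, and note that, since $c \geq \frac12$, each vertex has at least $cn-2$ green neighbours in each $G_i$, so the ``green'' picture is very dense. Fix $k := \lceil \frac{c^2 n}{16+\epsilon}\rceil$ and choose a set $D = \{v_1,\dots,v_k\}$ of vertices that is independent in $C$ with members pairwise far apart along $C$ (trivial since $k \ll n$; this $D$ plays the role of a red-independent, green-dominating set), together with a linear-sized set $Z$ of edges of $C$ that are pairwise far apart and avoid the edges incident to $D$. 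The edges of $Z$ will serve as insertion slots.

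Delete the vertices of $D$ from $C$ one at a time: removing $v_i$ from between its $C$-neighbours $a_i,b_i$ deletes $a_iv_i$ and $v_ib_i$, freeing their $\phi_0$-colours, and we patch the gap with the edge $a_ib_i$ coloured by the freed colour $\phi_0(a_iv_i)$. This produces a properly $\mathcal G$-coloured Hamiltonian cycle $C^{\ast}$ on $V(G)\setminus D$ together with a pool of $k$ unused colours. Now reinsert $v_1,\dots,v_k$ one at a time, each into some slot of $Z$: inserting $v_i$ into a slot $xy$ deletes $xy$ (returning its colour to the pool) and adds $xv_i$ and $v_iy$, which must receive two distinct colours from the current pool so that each lies in its assigned graph; call a slot \emph{valid} for $v_i$ if such an assignment exists. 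The key claim is that at the $i$-th reinsertion step at least $k-i+1$ slots of $Z$ are valid for $v_i$. Granting this, selecting a valid slot at each step yields $\prod_{i=1}^{k}(k-i+1) = k!$ Hamiltonian transversals, pairwise distinct because the widely separated slots of $Z$ reveal which vertex of $D$ occupies which slot.

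The crux---and the step I expect to be hardest---is the validity claim: preserving the rainbow property throughout the surgery against a steadily shrinking colour budget. A slot $xy$ is valid for $v_i$ as soon as there are two distinct available colours $c,c'$ with $xv_i \in G_c$ and $v_iy \in G_{c'}$; since each $G_c$ has minimum degree at least $cn$, while the pool is not tiny the available graphs jointly cover almost every edge at $v_i$, so essentially every slot of $Z$ is valid, and a double count of the slots whose two endpoints meet the prescribed neighbourhoods---using $c \geq \frac12$ to bound the overlaps---keeps at least $k-i+1$ slots valid at each step. The last few reinsertions, when the pool has shrunk to constant size, need a separate and tighter argument producing at least one valid slot apiece, costing only a factor $(O(\log n))!$ that is absorbed by shrinking $\epsilon$ and enlarging $n_0$. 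Tracking the spacing constraints on $D$ and $Z$ and the overlap losses in the double count is what forces the denominator $16$, and taking $n_0$ large enough to swallow all lower-order terms completes the proof.
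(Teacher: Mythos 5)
Your overall philosophy---bootstrap from one Hamiltonian transversal to many via local surgery near a sparse vertex set, following Thomassen---matches the paper's, but the execution diverges and, as written, has two genuine gaps.

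First, the deletion/patching step is not justified. When you remove $v_i$ from between its cycle-neighbours $a_i,b_i$ and patch with the chord $a_ib_i$ coloured $\phi_0(a_iv_i)$, you need $a_ib_i \in E(G_{\phi_0(a_iv_i)})$, and there is no reason this should hold: $G_{\phi_0(a_iv_i)}$ only has minimum degree $cn$, which says nothing about whether it contains the specific edge $a_ib_i$. Even using the other freed colour, or any of the growing pool, you would need a system of distinct representatives matching patch edges to available colours, and you neither formulate nor prove such a claim. Without it, the intermediate object $C^{\ast}$ is simply not a properly $\mathcal{G}$-coloured cycle. The paper sidesteps this entirely: it never passes through a smaller rainbow cycle. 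Instead it keeps all of $C$'s paths on $V \setminus S$ intact and reattaches the vertices of $S$ in one shot, with the colour constraints encoded up front in the RYB-digraph (yellow/blue arcs from $x_{i\mp 1}$ record exactly which edges can inherit the colour $i\mp 1$), and the existence of a second transversal obtained via Thomason's lollipop parity argument (Lemma~\ref{lemmaIndDom}). Second, your ``validity claim'' is exactly the hard part and is left unproved. The claim that each available colour graph covers most of $N(v_i)$ and that a double count controls overlaps is a plausible heuristic for a pool of size $\Omega(n)$, but the constraints are two-sided (both $xv_i$ and $v_iy$ must land in \emph{distinct} available graphs, among slots in the fixed set $Z$ that have not yet been used) and degenerate sharply as the pool shrinks. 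You explicitly defer the case of constant pool size to ``a separate and tighter argument producing at least one valid slot apiece,'' but that single-slot existence statement is precisely the content of the paper's base case and is what the lollipop argument is for; nothing in your sketch supplies it.

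There is also a structural omission. The paper does not take an arbitrary independent set of the right size: it needs $S$ to satisfy the quantitative ``locally dominating'' condition $d^{*}_{C,\phi}(S) \geq d$, meaning each cycle-neighbour of each $s\in S$ has at least $d$ yellow/blue out-arcs back into $S$. Producing such an $S$ of size $\Theta(n)$ is what Lemma~\ref{lemmaInS} does via a two-stage random construction plus Chernoff and Markov; this is the source of the constant $16$ in the denominator. Your set $D$ has no analogous degree property, only spacing constraints, and the spacing alone cannot carry the argument. Put briefly: your outline captures the right philosophy, but the two gaps (patching legality and the single-slot existence for small pools) are exactly where the real mathematics lives, and the paper's RYB-digraph framework plus the parity argument is what resolves them.
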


We also consider transversals which are perfect matchings. Specifically, given a graph $G$ with $2n$ vertices and family of subgraphs $\mathcal{G}=\{G_1,G_2,\ldots,G_n\}$, we refer to a $\mathcal{G}$-transversal which forms a perfect matching on $V(G)$ as a \textit{perfect matching transversal}. Although perfect matchings in graphs can be found in polynomial time using Edmonds' blossom algorithm \cite{Edmonds}, the existence problem for perfect matching transversals is known to be NP-complete even if we restrict to the case where $G$ is bipartite \cite{ItaietalPMTransversalNPComplete}. Perfect matching transversals are a special case of the well-studied notion of matchings in uniform hypergraphs. Indeed, we may define a 3-uniform hypergraph $H(\mathcal{G})$ on the vertex set $V(G) \cup [n]$ by adding the edge $uvi$ whenever $uv \in G_i$. Notice that a perfect matching transversal of $\mathcal{G}$ corresponds to a perfect matching in $H(\mathcal{G})$. The literature concerning the existence of perfect matchings in 3-uniform hypergraphs is extensive, and very powerful technical tools have been utilized to show that various degree conditions are sufficient (see, for example, \cite{Kwan20,LM14,ZZL18}). 

We are here interested in finding lower bounds for the number of perfect matching transversals in graph families meeting certain conditions. Such questions are considered less often in the literature, but there has been some very interesting recent work in the case where $G = K_{n,n}$ and each $G_i$ is a matching. Perarnau and Serra \cite{PS13} used probabilistic tools to show that, when each $G_i$ is not too large, the number of perfect matching transversals is roughly $c^n n!$ for some small constant $c$. More recently, a landmark pair of papers by Eberhard, Manners and Mrazovi\'{c} \cite{EMM19,EMM20pre} asymptotically enumerated the perfect matching transversals when the vertices of $K_{n,n}$ are labelled by the elements of a specified group of order $n$ and each $G_i$ is the perfect matching corresponding to edges whose endpoints have the same product. 

Our framework is quite different from the just-mentioned work, as we are considering each $G_i$ to have minimum degree bounded well away from 1. Nonetheless, the existence of these results motivates our proving a perfect matching analogue of Theorem \ref{thmMain}.

\begin{theorem}
Let $G$ be a $2n$-vertex graph of maximum degree $m$, where $m \geq 44$, and let $\mathcal{G}~=~\{G_1, \dots, G_n\}$ be a set of subgraphs of $G$, each of minimum degree
at least $10\log(m)+6$.
If $\mathcal{G}$ contains a perfect matching transversal, then $\mathcal G$ contains at least $\left\lceil \frac{1}{2}\log m\right\rceil !$ distinct perfect matching transversals.
\label{thmPM}
\end{theorem}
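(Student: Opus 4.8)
The plan is to run the argument behind Theorem~\ref{thmMain} with Hamiltonian cycles replaced everywhere by perfect matchings; since a perfect matching carries no global connectivity requirement, the probabilistic input is milder, which is exactly what lets the minimum-degree threshold drop from order $\sqrt{m\log m}$ to order $\log m$. First I would fix a perfect matching transversal $(M,\phi)$, orient each edge of $M$ so that $V(G)$ splits into an $n$-set $A$ of \emph{tails} and an $n$-set $B$ of \emph{heads}, write $M(v)$ for the $M$-partner of $v$ and $c(v)=\phi(vM(v))$, and colour $M$ red and everything else green. The elementary move is this: if $v_1v_2\cdots v_{2\ell}v_1$ is an $M$-alternating cycle with $v_{2i-1}v_{2i}\in M$ for all $i$, then giving each green edge $v_{2i}v_{2i+1}$ the colour $\phi(v_{2i-1}v_{2i})$ of the red edge preceding it (indices read cyclically) redistributes the $\ell$ freed colours bijectively, so the perfect matching obtained from $M$ by deleting the red edges of the cycle and adding its green ones, carrying this relabelling, is again a perfect matching transversal \emph{provided} $v_{2i}v_{2i+1}\in G_{\phi(v_{2i-1}v_{2i})}$ for every $i$; call such a cycle \emph{admissible}. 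The target configuration will be $k=\lceil\tfrac12\log m\rceil$ short, vertex-disjoint, internally admissible $M$-alternating paths $P_1,\dots,P_k$, each running from a tail $a_i$ to a head $b_i$ and beginning and ending with a red edge, with the cross-property that $a_j\in N_{G_{c(b_i)}}(b_i)$ for all $i,j$. Given this, for each permutation $\pi\in S_k$ one splices the paths into admissible alternating cycles by inserting the green link edges $b_ia_{\pi(i)}$ (again letting each link inherit the colour of the red edge it follows); the resulting $k!$ perfect matching transversals are pairwise distinct, since $M_\pi$ matches each $b_i$ to $a_{\pi(i)}$, and each differs from $M$.

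The reduction thus rests on producing the family $P_1,\dots,P_k$, and here I would prove a matching analogue of Thomassen's red/green lemma \cite{ThomassenRedGreen,Thomassen}: if there is a set $S\subseteq A$ (automatically independent in $M$) such that every head $h$ has at least $3k$ green neighbours in $S$ lying inside its own colour class $G_{c(h)}$, then such a family exists. One builds each path greedily by the rule ``from a tail $s$, step to $M(s)$, then step to a fresh green $S$-neighbour of $M(s)$ inside $G_{c(s)}$'', which makes every green edge admissible automatically; the surplus of at least $3k$ options at each head leaves room to keep the paths vertex-disjoint. The delicate point — which I expect to be the main obstacle — is arranging the cross-adjacencies $a_j\in N_{G_{c(b_i)}}(b_i)$, since these couple all $k$ paths together and must be realised while every vertex chosen stays in a prescribed subgraph; unlike in the uncoloured matching case, the degree budget has to be apportioned carefully between staying disjoint and meeting this coupling, and it is the tuning of this balance that forces the precise constants in $\delta(G_i)\ge 10\log m+6$.

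Finally, the existence of a suitable $S$ is where the hypotheses $m\ge 44$ and $\delta(G_i)\ge 10\log m+6$ enter, through the Lov\'asz Local Lemma. Orient $M$ uniformly at random (so that $M$-independence of $S$ comes for free once $S\subseteq A$) and place each tail into $S$ independently with a suitable constant probability $p$. The only bad events are, for each head $h$, the event that $h$ has fewer than $3k$ green $S$-neighbours inside $G_{c(h)}$; since $h$ has at least $10\log m+5$ green neighbours in $G_{c(h)}$, a Chernoff estimate taken over both the orientation coins and the selection coins bounds the probability of this event by a fixed negative power of $m$, while each bad event depends on only polynomially many others, so the symmetric Local Lemma applies once $m$ and the minimum degree are as large as assumed. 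Assembling the three steps — admissible alternating cycles, the colourful routing lemma, and the probabilistic construction of $S$ — yields the claimed $\lceil\tfrac12\log m\rceil!$ perfect matching transversals.
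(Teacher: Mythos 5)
Your strategy diverges from the paper's in a way that opens a genuine gap, and you have in fact put your finger on exactly where the gap is. You propose to fix $k=\lceil\tfrac12\log m\rceil$ vertex-disjoint $M$-alternating paths $P_1,\dots,P_k$ once and for all, and then generate $k!$ transversals by splicing with the link edges $b_ia_{\pi(i)}$ as $\pi$ ranges over $S_k$. For this to work for \emph{every} permutation simultaneously you need the cross-property $a_j\in N_{G_{c(b_i)}}(b_i)$ for \emph{all} $i,j$, i.e.\ the $k$ tails must lie in the common intersection of the $k$ sets $N_{G_{c(b_i)}}(b_i)$. With $\delta(G_i)=\Theta(\log m)$ these $k$ neighbourhoods live in $k$ different subgraphs $G_{c(b_1)},\dots,G_{c(b_k)}$, each of size only $\Theta(\log m)$ inside a host graph of maximum degree $m$, and there is no a priori reason for them to share even a single vertex, let alone $k$ of them. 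Nothing in the Local Lemma step you sketch (which only controls the size of $N_{G_{c(h)}}(h)\cap S$ for each individual head $h$) forces any overlap between the neighbourhoods of distinct heads. You flag this yourself as ``the main obstacle,'' but you do not resolve it, and I do not see how it can be resolved without raising the degree threshold substantially. (A smaller side issue: if any $P_i$ has length $1$, then $M_{\mathrm{id}}$ collapses back to $M$, so you would also need to guarantee length at least $3$.)

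The paper sidesteps this entirely by taking an inductive route instead of a ``fix all paths, then permute'' route. It defines $d^\times_{M,\phi}(S)=\min_{v\in S}|N^+_{B(H)}(v)\setminus S|$ and proves by induction on $n$ that a maximal red-independent $S$ with $d^\times_{M,\phi}(S)\ge d$ yields $(d+1)!$ transversals: first an exchange argument (the analogue of your alternating-cycle observation, formalized as Lemma~\ref{lemmaPMcond}) shows there is a vertex $v_0\in S$ such that \emph{every} blue arc $v_0w$ with $w\notin S$ lies in some transversal of $\Omega(M,\phi,S)$. This gives at least $d+1$ branching choices for the mate of $v_0$; each choice deletes $v_0$ and its new mate, leaves $d^\times\ge d-1$ on the residual instance, and the induction hypothesis supplies $d!$ transversals per branch. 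Because the branching only ever asks for edges at a \emph{single} vertex of $S$ at a time, no cross-adjacency between different heads is ever needed, and the $(d+1)!$ bound follows without the coupling obstacle your construction faces. The Local Lemma step is then carried out over the natural random choice of one endpoint per matching edge, and is otherwise in the same spirit as yours. If you want to repair your proposal, the key idea you are missing is precisely this: replace the simultaneous $k$-path configuration with an induction that peels off one vertex of $S$ at a time, using the exchange lemma at each step to certify $d+1$ admissible next edges.
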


In \cite{JoosKim2020}, Joos and Kim also show that, given a family $\mathcal G$ of $n$ Dirac graphs on a common set of $2n$ vertices, $\mathcal G$ must contain a perfect matching transversal. By applying Thomassen's methods from \cite{ThomassenRedGreen} and \cite{Thomassen}, we strengthen this result as well.

\begin{theorem}
For every $\epsilon>0$ and every $c \geq \frac{1}{2}$, there exists an $n_0$ such that, for all $n \geq n_0$, a family $\mathcal{G} = \{G_1, \dots, G_n\}$ of subgraphs of $K_{2n}$ in which each $G_i$ has minimum degree at least $cn$ contains at least $\left\lfloor\frac{cn}{2+\epsilon} \right\rfloor !$ distinct perfect matching transversals.
\label{thmDiracPM}
\end{theorem}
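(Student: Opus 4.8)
The plan is to prove Theorem~\ref{thmDiracPM} by the same strategy used for Theorem~\ref{thmDiracHam}, transposing the Hamiltonian argument to perfect matchings just as Theorem~\ref{thmPM} transposes Theorem~\ref{thmMain}. Set $t=\lfloor cn/(2+\epsilon)\rfloor$. By the perfect-matching theorem of Joos and Kim~\cite{JoosKim2020} --- whose hypotheses hold here with room to spare --- $\mathcal{G}$ has a perfect matching transversal $(M,\phi)$. Colour the $n$ edges of $M$ red, each red edge $e$ also remembering its label $\phi(e)$, and colour every other edge of $K_{2n}$ green. A single green re-routing of $M$ already gives a second perfect matching transversal (the perfect-matching analogue of Thomassen's exchange lemma in~\cite{ThomassenRedGreen}); the point of the proof is that the enormous green neighbourhoods guaranteed by $\delta(G_i)\ge cn$ produce not one but at least $t!$ such re-routings simultaneously.

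Concretely, the steps I would carry out are the following. \emph{(i)} Pick a uniformly random subset $\mathcal{E}\subseteq E(M)$ of size $s=\lceil 2n/(2+\epsilon/2)\rceil$, and for each $e\in\mathcal{E}$ a uniformly random orientation $e=u(e)v(e)$; put $U=\{u(e):e\in\mathcal{E}\}$ and $V=\{v(e):e\in\mathcal{E}\}$. \emph{(ii)} Form the bipartite ``usability'' graph $B$ on parts $U,V$ by joining $u(e')$ to $v(e)$ exactly when $u(e')v(e)\in E(G_{\phi(e)})$; a routine concentration argument --- conditional on $e$, a random $u(e')$ lands in the $(\ge cn)$-element set $N_{G_{\phi(e)}}(v(e))$ with probability about $c/2$, so $\mathbb{E}[\deg_B(v(e))]\gtrsim \tfrac{c}{2}s$, and symmetrically for the vertices of $U$ --- shows that, with high probability, $B$ has minimum degree at least $t$ on both sides, and the Dirac-type hypothesis makes $B$ dense enough that Hall's theorem gives it a perfect matching. \emph{(iii)} Every perfect matching $\mu$ of $B$ then extends to a perfect matching transversal of $\mathcal{G}$: colour each edge $u(e')v(e)\in\mu$ by $\phi(e)$, which is legal by the definition of $B$, and keep $\phi$ on $M\setminus\mathcal{E}$; since $\mu$ uses exactly the labels $\{\phi(e):e\in\mathcal{E}\}$, each once, and $(M\setminus\mathcal{E})\cup\mu$ is a perfect matching of $K_{2n}$, this is a genuine transversal. \emph{(iv)} Distinct $\mu$ have distinct edge sets, hence give distinct transversals, and a standard permanent estimate --- an $s\times s$ $0/1$ matrix with positive permanent and all row sums $\ge t$ has permanent at least $t!$ --- shows $B$ has at least $t!$ perfect matchings, so $\mathcal{G}$ has at least $t!$ perfect matching transversals.

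The main obstacle is the tension in step~\emph{(iv)} between enlarging the reconfigurable block and keeping the colour bookkeeping intact. One would like $B$ to be nearly complete bipartite, so that a full factorial's worth of perfect matchings is available at once; but $B$ has density only $\approx c/2$, which forces $s$ to be a definite constant larger than $t$ and then forces a careful verification that the minimum-degree-at-least-$t$ property survives the random choices and still leaves $B$ with a perfect matching --- this is exactly where the Dirac-type hypothesis $\delta(G_i)\ge cn$ is consumed, and where the probabilistic selection (a first-moment bound, or the Lov\'asz Local Lemma as in~\cite{Thomassen}) must be done with care; the slack in ``$2+\epsilon$'' is what these estimates spend. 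As in~\cite{ThomassenRedGreen,Thomassen}, the perfect-matching case is the gentler one: only an alternating $4$-cycle exchange is ever needed locally, which is why the final count carries a single power of $c$ rather than the $c^2$ of Theorem~\ref{thmDiracHam}.
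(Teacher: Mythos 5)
Your overall strategy is sound and runs parallel to the paper's (random orientation of $M$ to split $V$ into two halves, then count perfect matchings of an auxiliary bipartite graph). The paper packages the final counting step as an induction (Theorem~\ref{thmOutdeg}) rather than citing Marshall Hall's $1948$ theorem on the number of SDRs, but those two arguments are interchangeable. That said, there are two concrete problems with the proposal as written.

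First, the claim that $\mathbb{E}[\deg_B(u(e'))]$ is bounded below \emph{``symmetrically''} is false. The graph $B$ is not symmetric: the edge $u(e')v(e)$ is admitted into $B$ using the graph $G_{\phi(e)}$, so the neighborhood of a fixed $u(e')$ is computed in a \emph{different} $G_{\phi(e)}$ for each potential neighbor $v(e)$, and the Dirac condition gives no control over $\bigl|\{e : u(e')v(e)\in G_{\phi(e)}\}\bigr|$. In fact one can arrange $\mathcal{G}$ so that some $u(e')$ has degree exactly $0$ in $B$ (choose each $G_i$ so that $u(e')$ is nonadjacent to both endpoints of $e_i$). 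Consequently the appeal to Hall's theorem via density is not justified. Fortunately this step is unnecessary: $B$ contains the ``identity'' perfect matching $\{u(e)v(e):e\in\mathcal{E}\}$ by construction, since $u(e)v(e)=e\in G_{\phi(e)}$, and Marshall Hall's theorem requires a positive permanent together with lower bounds on the row sums of \emph{one} side only --- namely the side $V$, which you do control. The $U$-side degrees never need to be estimated.

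Second, and more seriously, the choice $s=\lceil 2n/(2+\epsilon/2)\rceil<n$ causes the argument to fail for $c$ near $\frac{1}{2}$. The vertices covered by $M\setminus\mathcal{E}$ are removed from the pool from which $U$ is drawn, so the correct bound is
$$\mathbb{E}[\deg_B(v(e))]\ \ge\ \tfrac{1}{2}\bigl(cn-1-2(n-s)\bigr)\ =\ \tfrac{cn}{2}-(n-s)-\tfrac{1}{2},$$
not $\tfrac{c}{2}s=\tfrac{cn}{2}-\tfrac{c}{2}(n-s)$: the loss term is $(n-s)$, not $\tfrac{c}{2}(n-s)$. With $n-s\approx\epsilon n/4$ the loss is $\approx \epsilon n/4$, while the slack $\tfrac{cn}{2}-t\approx c\epsilon n/4$ is \emph{smaller} whenever $c<1$, so the expected degree falls below $t$ for $\tfrac{1}{2}\le c<1$ and no concentration inequality can rescue it. The remedy is simply to take $\mathcal{E}=M$ (so $s=n$), in which case $\mathbb{E}[\deg_B(v(e))]\ge\tfrac{cn-1}{2}>\tfrac{cn}{2+\epsilon}$ with $\Theta(\epsilon cn)$ slack; a Chernoff bound plus a union bound over the $n$ vertices $v(e)$ then gives $\deg_B(v(e))\ge t$ for all $e$ with positive probability. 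Applying Marshall Hall's theorem to $B$ (using the identity matching for positivity) yields the claimed $t!$ perfect matching transversals. With these two corrections the proof goes through, and indeed this cleaned-up version is essentially the paper's Theorem~\ref{thmOutdeg} combined with a simplified (complete-graph) case of Theorem~\ref{thmLLLPM}.
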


It is worth contrasting the lower bound in Theorem \ref{thmDiracPM} to the best known upper bound for the number of perfect matching transversals. A result of Tanarenko \cite{Ta17} concerning perfect matchings in an $r$-uniform hypergraphs tells us that, in the setting of Theorem \ref{thmDiracPM}, there are fewer than $(2n)^{5n/3}$ perfect matching transversals. In contrast, we show that there are at least $\left(\frac{2}{11}n\right)^{n/3}$ perfect matching transversals. The gap between these bounds could be partially explained by the fact that Tanarenko's result describes arbitrary 3-uniform hypergraphs, whereas the hypergraphs $H(\mathcal{G})$ corresponding to our subgraph families always have an independent set on one-third of the vertices. Nonetheless, it is possible that the true number of perfect matching transversals in graph classes with bounded minimum degree is significantly larger than our 
lower bounds.

The paper will be organized as follows. In Section \ref{secProb}, we establish two probabilistic tools on which we will rely for many of our proofs. In Section \ref{secHam}, we will consider Hamiltonian transversals, and we will prove Theorems \ref{thmMain} and \ref{thmDiracHam}. In Section \ref{secPM}, we will consider perfect matching transversals, and we will prove Theorems \ref{thmPM} and \ref{thmDiracPM}. Finally, in Section \ref{secConclusion}, we will consider some open questions.

\section{Probabilistic tools}
\label{secProb}
Given a natural number $n$, let $[n]:=\{1,2,\ldots,n\}$. As many of our proofs are probabilistic, we will need two nontrivial probabilistic tools. First, we will use the following asymmetric version of the Lov\'asz Local Lemma 
given in \cite{Graham}. 
\begin{theorem}
Let $A_1, \dots, A_n$ be events in a probability space. Let $G$ be a graph with vertex set $A_1, \dots, A_n$ such that for each $i \in [n]$, $A_i$ is independent of any combination of events that are not neighbors of $A_i$ in $G$. Suppose there exist positive real numbers $x_1, \dots, x_n$, each less than $1$, such that for each $i \in [n]$, we have
$$\Pr(A_i) < x_i \prod(1 - x_j),$$
where the product is taken over all $j$ for which $A_j$ is a neighbor of $A_i$. Then 
$$\Pr(\overline{A_1} \land \dots \land \overline{A_n}) \geq \prod_{i = 1}^n (1 - x_i) > 0.$$
\label{LLL}
\end{theorem}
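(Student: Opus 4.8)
The plan is to prove Theorem~\ref{LLL} via the classical induction that underlies every proof of the Lov\'asz Local Lemma; the argument is essentially routine, so I will describe it at the level of key steps. The heart of the matter is the following claim: for every $i \in [n]$ and every $S \subseteq [n] \setminus \{i\}$, one has
\[
\Pr\Bigl(A_i \,\Big|\, \bigwedge_{j \in S} \overline{A_j}\Bigr) \le x_i ,
\]
with the understanding that conditioning on an empty conjunction means no conditioning at all, and that the event being conditioned on has positive probability (this positivity will be established alongside the bound). I would prove the claim by induction on $|S|$. The base case $S = \emptyset$ is immediate from the hypothesis, since $\Pr(A_i) < x_i \prod_j (1-x_j) \le x_i$ as every factor $1-x_j$ lies in $(0,1)$.

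For the inductive step, I would partition $S$ into $S_1 = \{\, j \in S : A_j \text{ is a neighbor of } A_i \text{ in } G \,\}$ and $S_2 = S \setminus S_1$, and rewrite the target probability as a ratio,
\[
\Pr\Bigl(A_i \,\Big|\, \bigwedge_{j \in S}\overline{A_j}\Bigr) = \frac{\Pr\bigl(A_i \wedge \bigwedge_{j \in S_1}\overline{A_j} \,\big|\, \bigwedge_{k \in S_2}\overline{A_k}\bigr)}{\Pr\bigl(\bigwedge_{j \in S_1}\overline{A_j} \,\big|\, \bigwedge_{k \in S_2}\overline{A_k}\bigr)} ,
\]
bounding numerator and denominator separately. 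The numerator is at most $\Pr\bigl(A_i \mid \bigwedge_{k \in S_2}\overline{A_k}\bigr)$ by monotonicity, and this equals $\Pr(A_i)$ because $A_i$ is independent of every Boolean combination of the events $\{A_k : k \in S_2\}$, none of which neighbor $A_i$; hence the numerator is at most $x_i \prod_{j \sim i}(1-x_j)$. For the denominator, I would enumerate $S_1 = \{j_1,\dots,j_r\}$, apply the chain rule, and bound each factor $\Pr\bigl(\overline{A_{j_\ell}} \mid \bigwedge_{\ell' < \ell}\overline{A_{j_{\ell'}}} \wedge \bigwedge_{k \in S_2}\overline{A_k}\bigr)$ from below by $1 - x_{j_\ell}$ using the induction hypothesis, each such conditioning set having size at most $|S|-1$; this simultaneously shows all relevant events have positive probability. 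The denominator is therefore at least $\prod_{\ell=1}^r (1-x_{j_\ell})$, and since $S_1$ lies inside the neighborhood of $A_i$ in $G$, the ratio is at most $x_i$, closing the induction.

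To finish, I would apply the chain rule one last time:
\[
\Pr\bigl(\overline{A_1} \wedge \dots \wedge \overline{A_n}\bigr) = \prod_{i=1}^n \Pr\Bigl(\overline{A_i} \,\Big|\, \bigwedge_{j < i}\overline{A_j}\Bigr) \ge \prod_{i=1}^n (1-x_i) > 0 ,
\]
where each factor is bounded below by applying the claim (with $S = \{1,\dots,i-1\}$), and the strict positivity holds because each $x_i < 1$. The only point requiring care is the numerator/denominator split: it must be chosen so that the independence hypothesis applies to exactly the set $S_2$, and one must track the positivity of every conditioning event that appears -- a bookkeeping issue that the induction resolves automatically. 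As Theorem~\ref{LLL} is quoted from \cite{Graham}, I would expect the paper to cite the reference rather than reproduce this standard argument.
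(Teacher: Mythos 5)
You correctly anticipated that the paper states Theorem~\ref{LLL} as a quoted result from \cite{Graham} and does not supply a proof. Your sketch reproduces the standard inductive argument for the asymmetric Lov\'asz Local Lemma accurately: the key conditional-probability claim, the $S_1/S_2$ split with independence applied to the non-neighbors, the chain-rule bound on the denominator feeding the induction, and the final chain-rule assembly are all in order, and you also correctly flagged the need to track positivity of the conditioning events through the induction.
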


Next, we will need the following forms of the Chernoff bound, which appear e.g.~in Chapter 4 of \cite{Mitzenmacher}.  
\begin{theorem}
\label{thmChernoff}
Let $Y$ be a random variable that is the sum of pairwise independent indicator variables and let $\mu$ be the expected value of $Y$. For any value $0 < \delta < 1$, the following inequalities hold:
\begin{eqnarray}
\Pr(Y < (1-\delta)  \mu) &\leq &\left (  \frac{e^{-\delta}}{(1-\delta)^{1-\delta}} \right )^{\mu} , \label{eq:chernoff1}\\
\Pr(Y < (1-\delta)  \mu) &\leq& \exp \left ( - \frac{1}{2} \delta^2 \mu \right ).\label{eq:chernoff2}
\end{eqnarray}
\end{theorem}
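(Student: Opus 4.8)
The plan is to prove both inequalities by the classical exponential-moment (Chernoff) method, deriving \eqref{eq:chernoff1} first and then obtaining \eqref{eq:chernoff2} from it via a one-variable calculus estimate. Write $Y = \sum_{i=1}^{k} X_i$, where each $X_i$ is a $\{0,1\}$-valued indicator with $\Pr(X_i = 1) = p_i$, so that $\mu = \sum_{i} p_i$. Fix any $t > 0$. Since $x \mapsto e^{-tx}$ is decreasing, the event $\{Y < (1-\delta)\mu\}$ coincides with $\{e^{-tY} > e^{-t(1-\delta)\mu}\}$, and Markov's inequality applied to the nonnegative random variable $e^{-tY}$ gives
\[
\Pr\bigl(Y < (1-\delta)\mu\bigr) \;\le\; \frac{\mathbb{E}[e^{-tY}]}{e^{-t(1-\delta)\mu}} \;=\; e^{t(1-\delta)\mu}\,\mathbb{E}[e^{-tY}].
\]

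Next I would bound the moment generating function. Factoring the expectation over the independent indicators (this is the one place the independence hypothesis, exactly as stated in the cited reference, is used) and then applying the elementary inequality $1 + x \le e^{x}$ yields
\[
\mathbb{E}[e^{-tY}] \;=\; \prod_{i=1}^{k}\bigl(1 - p_i + p_i e^{-t}\bigr) \;=\; \prod_{i=1}^{k}\bigl(1 + p_i(e^{-t}-1)\bigr) \;\le\; \prod_{i=1}^{k} e^{\,p_i(e^{-t}-1)} \;=\; e^{\mu(e^{-t}-1)}.
\]
Substituting back, $\Pr(Y < (1-\delta)\mu) \le \exp\bigl(\mu[\,e^{-t}-1+t(1-\delta)\,]\bigr)$ for every $t > 0$. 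I would then minimize the exponent over $t$: differentiating $e^{-t}-1+t(1-\delta)$ and setting the derivative to zero gives $e^{-t} = 1-\delta$, i.e. $t = \ln\frac{1}{1-\delta}$, which is positive because $0 < \delta < 1$. Plugging in this value, the exponent equals $-\delta - (1-\delta)\ln(1-\delta) = \ln\frac{e^{-\delta}}{(1-\delta)^{1-\delta}}$, which is precisely \eqref{eq:chernoff1}.

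Finally, to deduce \eqref{eq:chernoff2} it suffices to show $-\delta - (1-\delta)\ln(1-\delta) \le -\tfrac{1}{2}\delta^{2}$ on $(0,1)$. Setting $f(\delta) = -\delta - (1-\delta)\ln(1-\delta) + \tfrac{1}{2}\delta^{2}$, one checks $f(0) = 0$ and $f'(\delta) = \ln(1-\delta) + \delta < 0$ for $\delta \in (0,1)$, since $\ln(1-\delta) < -\delta$ there; hence $f(\delta) < 0$ on $(0,1)$, and \eqref{eq:chernoff2} follows from \eqref{eq:chernoff1} by monotonicity of $\exp$. The only genuinely delicate point is the factorization of $\mathbb{E}[e^{-tY}]$, which requires the indicators to be mutually independent (as in \cite{Mitzenmacher}); everything else is routine calculus together with Markov's inequality.
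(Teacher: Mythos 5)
Your argument is correct, and it is the standard exponential-moment proof; the paper itself offers no proof of this theorem, citing Chapter~4 of \cite{Mitzenmacher} instead, so there is no in-paper argument to diverge from. The optimization $e^{-t}=1-\delta$ giving exponent $-\delta-(1-\delta)\ln(1-\delta)$, and the comparison $f(\delta)=-\delta-(1-\delta)\ln(1-\delta)+\tfrac12\delta^2\le 0$ via $f(0)=0$, $f'(\delta)=\ln(1-\delta)+\delta<0$, are both right.

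The one substantive point is the one you flag yourself: the factorization $\mathbb{E}[e^{-tY}]=\prod_i\mathbb{E}[e^{-tX_i}]$ requires \emph{mutual} independence, whereas the theorem as transcribed in the paper hypothesizes only \emph{pairwise} independence. This is not a gap you can patch within your approach, because the statement is genuinely false under pairwise independence alone: taking $X_S=\bigoplus_{i\in S}Z_i$ over nonempty $S\subseteq[m]$ for i.i.d.\ uniform bits $Z_i$ gives $k=2^m-1$ pairwise independent indicators with $\Pr(Y=0)\ge 2^{-m}=1/(k+1)$, which exceeds any bound of the form $e^{-c\mu}$ with $\mu=k/2$. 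So your proof establishes the theorem under the hypothesis of the cited source (mutually independent indicators), which is the version actually needed: in every application in the paper (Theorem \ref{LLLresult}, Lemma \ref{lemmaInS}, Theorem \ref{thmLLLPM}) the relevant indicators are fully independent by construction. It would be worth recording explicitly that ``pairwise'' should read ``mutually'' in the statement.
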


\section{Many Hamiltonian transversals}
\label{secHam}
Throughout most of this section, we will consider a fixed graph $G$ on the vertex set $V =  \{x_1, \dots, x_n\}$ and a fixed family $\mathcal G = \{G_1, \dots, G_n\}$ of subgraphs of $G$. Throughout the following we consider indices modulo $n$, so that (for example) $x_{n+1} = x_1$ and $x_0 = x_n$.
Our goal will be to show that under appropriate conditions, if $\mathcal G$ contains a Hamiltonian transversal, then $\mathcal{G}$ contains many distinct Hamiltonian transversals.
So, we will assume that $\mathcal{G}$ has a fixed Hamiltonian transversal $(C,\phi)$. By reordering indices if necessary we may assume that $C = (x_1 x_2, x_2 x_3, \dots, x_{n-1}x_n, x_n x_1)$ and that for each $i \in [n]$, we have $\phi(x_i x_{i+1}) = i$. When $C$ satisfies these indexing conditions, we say that $\mathcal{G}$ is \textit{naturally indexed} with respect to $(C,\phi)$.

\begin{definition}  Assuming $\mathcal G$ is naturally indexed with respect to $(C,\phi)$, we say that $H$ is the \emph{full RYB-digraph} of $(\mathcal{G},C,\phi)$, denoted by $H_{\mathcal{G},C,\phi}$, if $V(H) = V$ and the following three conditions hold for each $i\in[n]$:
\begin{itemize}
    \item $H$ has bidirectional red edges $x_i x_{i+1}$ and $x_i x_{i+2}$,
    \item $H$ has a yellow arc $x_i x_j$  whenever $x_i x_j \in E(G_i)$ and $j \not \in \{i-1, i+1\}$,
    \item $H$ has a blue arc $x_i x_j$ whenever $x_i x_j \in E(G_{i-1})$ and $j \not \in \{i-1, i+1\}$.
\end{itemize}
More generally, we refer to any spanning subgraph of $H_{\mathcal{G},C,\phi}$ which contains all of the red edges of $H_{\mathcal{G},C,\phi}$ as an \emph{RYB-digraph} of $(\mathcal{G},C,\phi)$. 
 \label{defRYB}
  \end{definition}
Because the graph $G$ and the family $\mathcal{G}$ remain fixed throughout most of this section (before changing to another fixed family in Subsection \ref{subsec:hamcomplete}), we may write $H_{C,\phi}$ for the full RYB-digraph of $(\mathcal{G},C,\phi)$, and more generally talk of an RYB-digraph of $(C,\phi)$, with no risk of ambiguity.  Also, at times we will want to ignore arc directions in an RYB-digraph $J$; we write $\overline{J}$ for the undirected graph underlying $J$ (where arcs may retain their color while losing direction). 

Given an RYB-digraph $J \subseteq H_{C,\phi}$, we use $R(J)$, $Y(J)$, and $B(J)$ to denote, respectively, the sets of red edges, yellow arcs, and blue arcs of $J$. For a vertex $v \in V$ and any set of edges $F \subseteq \binom{V}{2}$, we use $N_{F}(v)$ to denote the set of vertices $w \in V$ such that $vw \in F$. Similarly, for any set of arcs $A \subseteq V \times V$, we use $N^+_{A}(v)$ to denote the set of vertices $w \in V$ such that $vw \in A$. 
We say that a set $S \subseteq V$ is \emph{red-independent} with respect to $(C,\phi)$ if no two vertices in $S$ are adjacent by an edge in $R(H_{C,\phi})$. Notice that, because $R(J) = R(H_{C,\phi})$ for all RYB-digraphs of $(C,\phi)$, the notion of red-independence does not depend on any specific $J$. We say that a nonempty set $S \subseteq V$ is \emph{locally $J$-dominating} if for every vertex $x_i \in S$, the sets $N^+_{Y(J)}(x_{i-1}) \cap S$ and $N^+_{B(J)}(x_{i+1}) \cap S$ are nonempty, omitting the prefix $J$ when it is clear from context. An example of a red-independent locally dominating set in an RYB-digraph is given in Figure \ref{figJ}. 

\begin{figure}[t]

\begin{tikzpicture}
[scale=3.5,auto=left,every node/.style={circle,fill=gray!30}]

\draw[red, ultra thick] (0,0) circle [thick, radius=1];
\node (z) at (-0.33,-0.75) [fill = white]  {$j-1$};
\node (z) at (0.3,0.75) [fill = white]  {$1$};
\node (z) at (-0.3,0.75) [fill = white]  {$n$};
\draw[yellow] [->, line width=0.5mm] (-0.25,0.97) -- (-0.18,0.6);
\draw[blue] [->, line width=0.5mm] (0.25,0.97) -- (0.2,0.6);
\node (z) at (0,1.15) [fill = white]  {$x_1$};
\node (z) at (0.28,1.12) [fill = white]  {$x_2$};
\node (z) at (-0.28,1.12) [fill = white]  {$x_n$};

\node (z) at (0.25,0.97) [draw = black]  {};
\node (z) at (-0.25,0.97) [draw = black]  {};

\node (z) at (1.1,0.3) [fill = white]  {$x_i$};
\node (z) at (0.62,0.48) [fill = white]  {$i - 1$};
\draw[yellow] [->, line width=0.5mm] (0.86,0.5) -- (0.5,0.3);
\node (z) at (0.86,0.5) [draw = black]  {};

\node (z) at (0.8,0.11) [fill = white]  {$i $};
\draw[blue] [->, line width=0.5mm] (0.997,0.07) -- (0.6,0.01);
\node (z) at (0.997,0.07) [draw = black]  {};

\draw[blue] [->,line width=0.5mm] (0.752,-0.652) -- (0.45,-0.4);
\node (z) at (0.752,-0.652) [draw = black]  {};

\draw[yellow] [->, line width=0.5mm] (0.95,-0.32) -- (0.58,-0.17);
\node (z) at (0.95,-0.32) [draw = black]  {};

\draw[yellow] [->, line width=0.5mm] (-0.55,-0.83) -- (-0.36,-0.51);
\node (z) at (-0.55,-0.83) [draw = black]  {};

\node (z) at (-0.83,-0.82) [fill = white]  {$x_j$};
\node (z) at (-0.72,-0.35) [fill = white]  {$j$};

\draw[blue] [->, line width=0.5mm] (-0.83,-0.55) -- (-0.55,-0.35);
\node (z) at (-0.83,-0.55) [draw = black]  {};

\draw[yellow] [->, line width=0.5mm] (-0.95,0.3) -- (-0.6,0.17);
\node (z) at (-0.95,0.3) [draw = black]  {};

\draw[blue] [->, line width=0.5mm] (-0.75,0.66) -- (-0.47,0.43);
\node (z) at (-0.75,0.66) [draw = black]  {};

\filldraw (0,1) circle (1.8pt);
\filldraw (0.955,0.3) circle (1.8pt);
\filldraw (0.866,-0.5) circle (1.8pt);
\filldraw (-0.7071,-0.7071) circle (1.8pt);
\filldraw (-0.866,0.5) circle (1.8pt);
\end{tikzpicture} 
\caption{The RYB-digraph $J'$ of $(\mathcal{G},C,\phi)$ used in the proof of Theorem \ref{lemmaPMcond}, along with a red-independent locally-dominating set $S \subseteq V$. The vertices of $S$ are shown in black. Each vertex adjacent to $S$ by an edge in $C$ has one incident yellow or blue arc. Each yellow arc and blue arc has an endpoint in $S$ not shown in the figure. 
}
\label{figJ}
\end{figure}
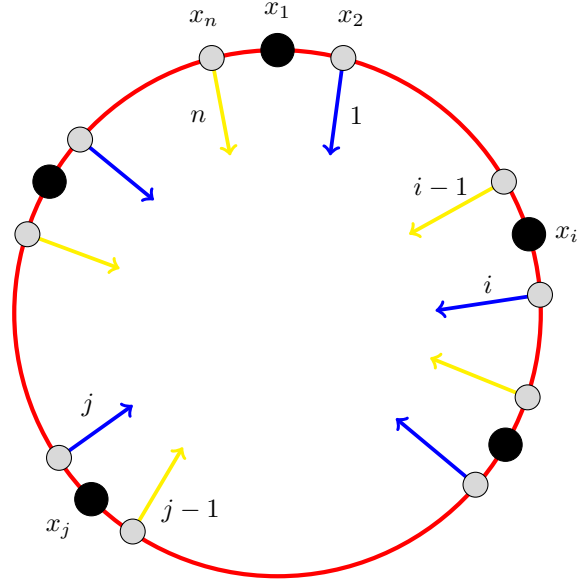

\subsection{Technical conditions} \label{sectionRYB}

Our first result gives sufficient conditions for the existence of a second Hamiltonian transversal in a graph family that already contains a Hamiltonian transversal. Given a red-independent set $S \subseteq V$, write $\Omega(C,\phi,S)$ for the set of Hamiltonian transversals $(C', \psi)$ such that $C'$ contains all $|S|$ of the paths induced by $C$ on the vertices $V \setminus S$, and such that for all $e \in C \cap C'$ we have $\phi(e) = \psi(e)$, and for all $v \in N_{C}(S)$ the edge $e$ joining $v$ to $S$ in $C$ and the edge $e'$ joining $v$ to $S$ in $C'$ satisfy $\phi(e) = \psi(e')$. 

\begin{lemma}\label{lemmaIndDom}
Let $J$ be an RYB-digraph of $(C,\phi)$. If there exists a nonempty set $S \subseteq V$ that is red-independent and locally $J$-dominating, then $\Omega(C,\phi,S)$ contains a second Hamiltonian transversal, distinct from $(C,\phi)$, comprised of edges in $E(\overline{J}) \cap E(G)$.
\end{lemma}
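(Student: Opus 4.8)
The plan is to delete the vertices of $S$ from $C$, work out exactly which edges a member of $\Omega(C,\phi,S)$ is allowed to use, and then reinsert the vertices of $S$ differently using the arcs that local $J$-domination makes available; this is the transversal analogue of Thomassen's construction in \cite{ThomassenRedGreen} of a second Hamiltonian cycle from a red-independent, green-dominating set. For the skeleton: put $s:=|S|$. Since $S$ is red-independent no two of its vertices lie at distance $1$ or $2$ on $C$, so deleting $S$ from $C$ leaves $s$ vertex-disjoint paths $Q_1,\dots,Q_s$, each with at least two vertices, and $N_C(S)$ consists of exactly $2s$ vertices, two per path. Index so that $S=\{u_1,\dots,u_s\}$ appears in this cyclic order around $C$ with $Q_j$ between $u_j$ and $u_{j+1}$, and let $\alpha_j$ and $\omega_j$ be the endpoints of $Q_j$, where $\alpha_j=x_{i+1}$ if $u_j=x_i$ and $\omega_j=x_{i'-1}$ if $u_{j+1}=x_{i'}$; the $C$-edge from $\alpha_j$ into $S$ is $u_j\alpha_j$ with $\phi$-value $i$, and the $C$-edge from $\omega_j$ into $S$ is $\omega_j u_{j+1}$ with $\phi$-value $i'-1$. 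Note $s\ge 2$, since a set of size $1$ cannot be locally $J$-dominating: a yellow arc from $x_{i-1}$ into $\{x_i\}$ is forbidden by the index restriction in Definition \ref{defRYB}.

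Next I would decode $\Omega$. Any $(C',\psi)\in\Omega$ contains the $s$ paths and hence exactly $2s$ further edges; for the phrase ``the edge joining $v$ to $S$ in $C'$'' in the definition of $\Omega$ to make sense for every $v\in N_C(S)$, each path endpoint's unique non-path $C'$-edge must go to $S$, so all $2s$ further edges run from $N_C(S)$ to $S$, $S$ is independent in $C'$, and each vertex of $S$ receives exactly two of them, each with $\psi$-value equal to the $\phi$-value of the corresponding $C$-edge into $S$. For $v=\alpha_j$ the forced $\psi$-value is $i$, so the new edge lies in $G_i$ and joins $\alpha_j=x_{i+1}$ to $S$; hence it is either the $C$-edge $u_j\alpha_j$ (an edge of $\overline{J}$, being a red edge of $H_{C,\phi}$) or a blue arc of $H_{C,\phi}$ out of $\alpha_j$ into $S$, which, since $S$ is red-independent, necessarily avoids $u_j$. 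Symmetrically, the new edge at $\omega_j$ is the $C$-edge $u_{j+1}\omega_j$ or a yellow arc of $H_{C,\phi}$ out of $\omega_j$ into $S$, avoiding $u_{j+1}$. Local $J$-domination says precisely that each $\alpha_j$ has a blue arc of $J$ into $S$ and each $\omega_j$ a yellow arc of $J$ into $S$; so each of the $2s$ endpoints $v$ has a set $\mathcal{T}(v)\subseteq S$ of at least two \emph{admissible targets} — its $C$-neighbour in $S$ together with at least one more — and every edge from $v$ to a vertex of $\mathcal{T}(v)$ lies in $E(\overline{J})\cap E(G)$. Conversely, choosing one admissible target for each endpoint so that each vertex of $S$ is chosen exactly twice and so that contracting each $Q_j$ to the edge joining its two chosen targets yields a single cycle on $S$ produces a Hamiltonian cycle $C'$; colouring the path edges by their $\phi$-values and each new edge by its forced value gives a bijection $\psi\colon E(C')\to[n]$ with $e\in G_{\psi(e)}$, so $(C',\psi)\in\Omega(C,\phi,S)$ uses only edges of $E(\overline{J})\cap E(G)$, and if the choice is not the trivial one (every endpoint keeping its $C$-neighbour, which rebuilds $C$) then $(C',\psi)\ne(C,\phi)$.

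So it suffices to produce a non-trivial such choice. First, \emph{some} choice with every vertex of $S$ selected twice exists, by Hall's theorem applied to the bipartite graph between the $2s$ endpoints and $2s$ copies $u_k^{(1)},u_k^{(2)}$ of the vertices of $S$, in which $v$ is joined to both copies of each member of $\mathcal{T}(v)$: for a set $X$ of endpoints, put $W=\bigcup_{v\in X}\mathcal{T}(v)$; then every $v\in X$ has its $C$-neighbour in $W$, so $X$ injects into $\{\alpha_j:u_j\in W\}\cup\{\omega_j:u_{j+1}\in W\}$ and $|X|\le 2|W|=|N(X)|$. Since every vertex of this bipartite graph has degree at least two, there is a perfect matching other than the trivial one, so a non-trivial choice exists. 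Among all non-trivial choices, take one minimising the number of components of the resulting multigraph on $S$; if this number were at least two, a single exchange move — swapping the targets of two suitably chosen endpoints lying in different components, using their admissible blue/yellow arcs (or the original $C$-edges) — would keep every vertex of $S$ selected twice while merging the two cycles, contradicting minimality. This is Thomassen's rotation argument from \cite{ThomassenRedGreen}, and the main obstacle is exactly this last point: arranging the exchange so that it respects admissibility and strictly decreases the number of components. The remaining checks — that $C'$ is a Hamiltonian cycle, that $\psi$ is a well-defined bijection with $e\in G_{\psi(e)}$, that every edge of $C'$ lies in $E(\overline{J})\cap E(G)$, and that $(C',\psi)\ne(C,\phi)$ — are routine bookkeeping once the reduction above is in place.
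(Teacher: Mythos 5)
Your setup — deleting $S$ from $C$, reading off the $2s$ endpoint-to-$S$ edges that a member of $\Omega(C,\phi,S)$ may use, identifying the admissible targets from local $J$-domination, and reducing the problem to a suitably constrained assignment — is a sound and correct decoding of $\Omega$, and your observations that $s \geq 2$ and that each path $Q_j$ has at least two vertices are both right. But the approach then diverges substantially from the paper, and the divergence is where a genuine gap appears.

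The paper does not find the second transversal by iterating exchange moves on a $2$-regular multigraph. Instead it first \emph{trims} $J$ to a subdigraph $J'$ in which every path endpoint retains exactly one yellow/blue arc into $S$, so that $\overline{J'}$ has maximum degree $3$ and only the $2s$ endpoints have degree exactly $3$. It then runs Thomason's lollipop argument on the auxiliary graph of Hamiltonian paths in $\overline{J'}$ beginning with the edge $x_1 x_2$: every such path has degree $1$ or $2$, the path coming from $C$ has degree $1$, and the handshake lemma forces a second odd-degree (hence degree-$1$) path, which closes up into a second Hamiltonian cycle using only edges of $\overline{J'} \subseteq \overline{J} \cap G$. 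The injectivity and coloring bookkeeping is then done directly on this cycle. Parity is doing the work, not a minimization-and-merge argument.

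Your final step is, as you yourself flag, not a proof. The claim that ``a single exchange move, swapping the targets of two suitably chosen endpoints lying in different components, would keep every vertex of $S$ selected twice while merging the two cycles'' needs to be established, and it does not follow from anything you have set up. Concretely: (i) swapping a single pair of endpoints' targets typically breaks the $2$-regularity constraint on $S$, so the legitimate moves are alternating-cycle-style exchanges, whose existence between two prescribed components with prescribed admissibility constraints is not automatic; (ii) even when a legal exchange exists, it is not guaranteed to decrease the number of components (it can split rather than merge); (iii) your minimization is over \emph{non-trivial} choices, but the exchange could land on the trivial choice that rebuilds $C$, in which case you have not contradicted minimality nor produced a second transversal. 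None of these is a matter of ``routine bookkeeping''; collectively they are the entire content of the lemma. Also, the step you attribute to \cite{ThomassenRedGreen} is not what Thomassen does there — Thomassen's argument in that paper is precisely the lollipop parity argument the present paper imports, not a component-merging rotation. (Separately, the non-trivial Hall matching you produce is unnecessary overhead: the lollipop argument does not need one to start from.) To repair your proof along the lines you propose, you would either have to prove the exchange-merge lemma outright, or — more economically — replace your final paragraph with the trim-then-lollipop argument as in the paper.
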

\begin{proof}
By permuting the indices of $x_1, \dots, x_n$, we may assume that $\mathcal{G}$ is naturally indexed with respect to $(C,\phi)$ and that $x_1 \in S$. We will find a second Hamiltonian transversal in $\Omega(C,\phi,S)$ which uses only edges underlying a subgraph $J^\prime \subseteq J$
satisfying $E(\overline{J^\prime}) \subseteq E(\overline{J}) \cap E(G)$. To obtain $J'$, first remove all edges in $R(J) \setminus C$. Then, remove yellow and blue arcs until $J^\prime$ has, for each $x_i \in S$, exactly one yellow arc joining $x_{i-1}$ to $S$ and exactly one blue arc joining $x_{i+1}$ to $S$ (such arcs must exist in $J^\prime$ by the hypothesis that $S$ is locally-dominating). This means that for every $v \in V \setminus S$, if $v$ is adjacent in $C$ to a vertex in $S$, then $v$ has degree 3 in $\overline{J^\prime}$, and otherwise $v$ has degree 2 in $\overline{J^\prime}$.

Our first aim is to show that $\overline{J^\prime}$ contains at least two Hamiltonian cycles. To this end, we define an auxiliary graph $A$ whose vertex set is the set of Hamiltonian paths in $\overline{J^\prime}$ beginning with the edge $x_1 x_2$. We define adjacency in $A$ as in Thomason's lollipop argument (introduced in \cite{Thomason}): let $Q = (x_1, x_2, v_3, \dots, v_n)$ be adjacent to $Q' = (x_1, x_2, v_3, \dots, v_{j-1}, v_j, v_n, v_{n-1}, \dots, v_{j+1})$ whenever $v_jv_n \in E(\overline{J^\prime})$. Observe that the induced subgraph of $\overline{J^\prime}$ on the vertices $V \setminus S$ consists of exactly $s:= |S|$ paths $P_1 \dots, P_s$, so that every Hamiltonian path  in $\overline{J^\prime}$ beginning with the edge $x_1 x_2$ has the form 
\begin{equation}x_1, P_{1}, x_{j_1}, P_{i_1}, \ldots,x_{j_{s-1}}, P_{i_{s-1}},
\label{eq:hampathform}
\end{equation}
for some permutation $\left(i_1 \, i_2 \, \ldots \, i_{s-1}\right)$ of the set $[2,s]$ and some  $\{j_1 , j_2 , \ldots , j_{s-1}\} \subseteq [n]$. As each endpoint of each $P_i$ is adjacent in $C$ to a vertex in $S$, every Hamiltonian path in $Q \in V(A)$ ends in a vertex with degree 3 in $\overline{J^\prime}$. Thus, $\deg_A(Q) \in \{1,2\}$, with a path $Q = (x_1, x_2, v_3, \dots, v_n)$ having degree $2$ in $A$ if and only if $v_n$ is not adjacent to $x_1$ in $\overline{J^\prime}$. As the path $Q_0 = (x_1, x_2, x_3, \dots, x_n)$ has degree $1$ in $A$ and $\sum_{Q \in V(A)} \deg_A(Q) \equiv 0 \pmod{2}$, there must exist another path $Q^* = (x_1, x_2, y_3, \dots, y_n)$, distinct from $Q_0$, of degree $1$ in $A$. Because $y_nx_1 \in E(\overline{J^\prime})$, the vertex sequence defining $Q^\ast$ defines a Hamiltonian cycle in $G$, say $C^\ast$, which is distinct from $C$, uses only edges in $E(\overline{J^\prime})$, and contains all $|S|$ paths induced by $C$ in $V \setminus S$. 

It now suffices to define a valid bijection $\phi^\ast:E(C^\ast) \rightarrow [n]$. Recall that $Y(J^\prime)$ and $B(J^\prime)$ consist of directed edges while $R(J^\prime) = \{x_ix_{i+1} \, : \, i \in [n]\}$ is comprised of bidirectional edges. We begin by defining an injection $\psi:E(Q^\ast) \rightarrow [n]$: given $x_ix_j \in E(Q^\ast)$,
\begin{itemize}
    \item if $x_ix_{i+1}\in R(J^\prime)$, then set $\psi(x_ix_{i+1}) :=i$,
    \item if $x_ix_j \in Y(J^\prime)$, then set $\psi(x_ix_j) := i$, and
    \item  if $x_i x_j \in B(J^\prime)$, then set $\psi(x_i x_j) := i-1$.
\end{itemize} 
Observe that this definition satisfies $\phi(e) = \psi(e)$ for all $e \in C \cap Q^\ast$ and that, for all $v \in N_{C}(S)\setminus \{y_n\}$, the edge $e$ joining $v$ to $S$ in $C$ and the edge $e'$ joining $v$ to $S$ in $Q^\ast$ satisfy $\phi(e) = \psi(e')$. 
 Moreover, it follows directly from Definition \ref{defRYB} that $e \in G_{\psi(e)}$ for each $e \in E(Q^\ast)$.
 
 To see that $\psi$ is injective, first notice that,

 the edges in $E(Q^\ast) \cap R(J^\prime)$ must be mapped to distinct values by $\psi$. Because the vertices in $S$ are at a mutual distance of at least $3$ in $J^\prime$ and every vertex in $V \setminus S$ is incident to at most one arc in $Y(J^\prime) \cup B(J^\prime)$, the edges in $E(Q^\ast) \cap (Y(J^\prime) \cup B(J^\prime))$ are also mapped to distinct values by $\psi$. Now, suppose  that there exist $x_ix_{i+1} \in E(Q^\ast) \cap R(J^\prime)$ and $x_kx_\ell \in E(Q^\ast) \cap Y(J^\prime)$ satisfying $\psi(x_ix_{i+1}) = \psi(x_kx_\ell)$. This is only possible if $k=i$ and $x_\ell,x_{i+1} \in S$, in which case $x_\ell, x_i, x_{i+1}$ defines a 2-edge subpath of $Q^\ast$. This means $Q^\ast$ visits two vertices in $S$ without traversing one of the paths formed by the components of $\overline{J^\prime}[V \setminus S]$ in between, contradicting the fact that all Hamiltonian paths in $\overline{J^\prime}$ beginning with $x_1x_2$ have the form given by \eqref{eq:hampathform}. Using a similar argument to show that there are no $e \in E(Q^\ast) \cap R(J^\prime)$ and $f \in E(Q^\ast) \cap B(J^\prime)$ with $\psi(e) = \psi(f)$, we may conclude that the function $\psi:E(Q^\ast) \rightarrow [n]$ is injective.

Because $Q^*$ must have the form given by \eqref{eq:hampathform}, $y_n$ is the endpoint of a red path in the graph $\overline{J^\prime}[V \setminus S]$. It follows that $y_n$ is incident in $\overline{J^\prime}$ to two red edges and either a yellow edge or a blue edge. In either case, Definition \ref{defRYB} tells us that, among the three edges incident to $y_n$ in $\overline{J^\prime}$, the red edge not in $E(Q^\ast)$ and the non-red edge must both lie in some $G_{i} \in \mathcal{G}$. In fact, because $J^\prime$ has at most one yellow or blue arc emanating from each vertex, these two edges are the only edges in $E(\overline{J^\prime}) \cap E(G_i)$. Thus, $c$ must be the unique color in $[n] \setminus \psi(E(Q^\ast))$. Defining $\phi^\ast$ by $\phi^\ast(y_nx_1) := c$ and $\phi^\ast(e) := \psi(e)$ for $e \in E(Q^\ast)$ gives us the desired bijection. The result follows from the observation that the colors we assigned to $Q^*$ by $\psi$ are such that $(C^\ast,\phi^\ast) \in \Omega(C, \phi, S)$.
\end{proof}

It is worth clearly noting a couple of easily verified properties of Hamiltonian transversals in $\Omega(C,\phi,S)$.
\begin{remark}
\label{remark:stayyy}
For every $(C',\psi) \in \Omega(C,\phi,S)$
\begin{enumerate}[(a)]
\item if $e \in C$ is not incident to a vertex of $S$, then $e$ belongs to $C'$, and
\item the vertices of $S$ are at a mutual distance of at least $3$ in $C'$.
\end{enumerate}
\end{remark}

Assuming $\mathcal G$ is naturally indexed with respect to $(C,\phi)$, for every red-independent set $S \subseteq V$  
 we define
$$d_{C,\phi}^*(S) := \min_{x_i \in S}\{|N^+_{Y(H_{C,\phi})}(x_{i-1}) \cap S|,|N^+_{B(H_{C,\phi})}(x_{i+1}) \cap S|\}.$$ 
Notice that here we are taking the minimum over $2|S|$ different values. We claim that, for every Hamiltonian transversal $(C',\psi) \in \Omega(C,\phi,S)$,
\begin{equation}
d^\ast_{C,\phi}(S) = d^\ast_{C',\psi}(S).
\label{eq:dinvariant}\end{equation}
Indeed, given $x_i \in S$, we know by Remark \ref{remark:stayyy} that $x_{i-2}x_{i-1} \in C'$. Let $v \in S$ denote the other neighbor of $x_{i-1}$ in $C'$. Notice that we could have $v = x_i$ and, if not, we have $x_{i-1}v \in Y(H_{C,\phi})$ and $x_{i-1}x_i \in Y(H_{C'})$. This means that the set of vertices in $S$ which are joined to $x_{i-1}$ by an edge in $G_{i-1}$ can be expressed in two ways: as $(N^+_{Y(H_{C,\phi})}(x_{i-1}) \cap S) \cup \{x_i\} $ and as  $(N^+_{Y(H_{C',\psi})}(x_{i-1}) \cap S) \cup \{v\} $. We may then conclude that $|N^+_{Y(H_{C,\phi})}(x_{i-1}) \cap S| = |N^+_{Y(H_{C',\psi})}(x_{i-1}) \cap S|$. The claim then follows from the essentially identical argument that $|N^+_{B(H_{C,\phi})}(x_{i+1}) \cap S| = |N^+_{B(H_{C',\psi})}(x_{i+1}) \cap S|$.

We are now ready to prove our main technical result.

\begin{theorem}
Let $S \subseteq V$ be a red-independent set. If $d^*_{C,\phi}(S) \geq d$,
then $\Omega(C,\phi,S)$ contains at least $(d+1)!$ distinct Hamiltonian transversals.
\label{thmHamOut}
\end{theorem}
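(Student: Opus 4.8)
The plan is to induct on $d$. The base case $d=0$ is immediate, since $(C,\phi)$ itself lies in $\Omega(C,\phi,S)$, so $|\Omega(C,\phi,S)|\ge 1=1!$. For the inductive step I will single out one vertex $x_1\in S$ (re-indexing so that $\mathcal G$ is naturally indexed with respect to $(C,\phi)$ and $x_1\in S$) and partition $\Omega(C,\phi,S)$ according to which vertex of $S$ is adjacent to $x_2$ in the new cycle. Two preliminary facts drive the argument. First, deleting a single vertex $v$ from $S$ lowers each of the $2|S|$ quantities that define $d^*_{C,\phi}$ by at most $1$, so $d^*_{C,\phi}(S\setminus\{v\})\ge d-1$; moreover every transversal in $\Omega(C,\phi,S\setminus\{v\})$ keeps intact the long path of $C$ obtained by merging, through $v$, the two $C$-paths adjacent to $v$, and this forces exactly the structure needed to also lie in $\Omega(C,\phi,S)$, whence $\Omega(C,\phi,S\setminus\{v\})\subseteq\Omega(C,\phi,S)$. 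Second, $\Omega$ depends only on the ``frame'' data --- the $|S|$ paths of $C$ on $V\setminus S$, their colourings, and the colour each path-endpoint owes to $S$ --- so $\Omega(C',\psi,S)=\Omega(C,\phi,S)$ for every $(C',\psi)\in\Omega(C,\phi,S)$; and by Remark \ref{remark:stayyy}(b) the set $S$ is still red-independent with respect to $(C',\psi)$, so these facts and equation \eqref{eq:dinvariant} may be reapplied with any member of $\Omega(C,\phi,S)$ as base point.

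Now fix $(C',\psi)\in\Omega(C,\phi,S)$. The vertex $x_2$ is an endpoint of a path of $C$, so it keeps its path-edge $x_2x_3$ (Remark \ref{remark:stayyy}(a)) and is joined to exactly one vertex of $S$; since the colour of that edge must be $\phi(x_1x_2)$, Definition \ref{defRYB} forces it to be the red edge $x_1x_2$ or a blue arc out of $x_2$, so the $S$-neighbour of $x_2$ lies in $W:=\{x_1\}\cup\big(N^+_{B(H_{C,\phi})}(x_2)\cap S\big)$, and $|W|\ge d+1$ because $d^*_{C,\phi}(S)\ge d$. Writing $\Omega_w$ for the set of transversals of $\Omega(C,\phi,S)$ in which $x_2$ is joined to $w$, the sets $\{\Omega_w\}_{w\in W}$ are pairwise disjoint, and I claim each has at least $d!$ elements. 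For $w=x_1$ this is clear: $\Omega(C,\phi,S\setminus\{x_1\})\subseteq\Omega_{x_1}$ and the inductive hypothesis gives $|\Omega(C,\phi,S\setminus\{x_1\})|\ge d!$. For $w=z$ a blue out-neighbour of $x_2$ in $S$, I first exhibit some $(C_z,\psi_z)\in\Omega_z$ (see below); then, because $x_2$ lies on a path of $C_z$, the merged path of $C_z$ through $z$ contains the edge $x_2z$, so $\Omega(C_z,\psi_z,S\setminus\{z\})\subseteq\Omega_z$, and the two preliminary facts together with \eqref{eq:dinvariant} and the inductive hypothesis give $|\Omega(C_z,\psi_z,S\setminus\{z\})|\ge d!$. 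Summing over $W$ yields $|\Omega(C,\phi,S)|\ge (d+1)\cdot d!=(d+1)!$.

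The crux --- and the step I expect to be the main obstacle --- is producing, for every blue out-neighbour $z$ of $x_2$ in $S$, a Hamiltonian transversal in $\Omega(C,\phi,S)$ that actually uses the edge $x_2z$. I would obtain this by a rotation argument modelled on the proof of Lemma \ref{lemmaIndDom}: build an RYB-digraph $J$ of $(C,\phi)$ in which $x_2z$ is the \emph{unique} blue arc out of $x_2$ while $S$ remains locally $J$-dominating, and then run Thomason's lollipop on the underlying graph (after the usual trimming, as in Lemma \ref{lemmaIndDom}) in a form rooted at the edge $x_2z$ --- for instance on Hamiltonian paths beginning with the arc $x_2z$, with the rotation performed at the free endpoint --- so that the parity count delivers a Hamiltonian cycle through $x_2z$; one then colours it by the rule of Definition \ref{defRYB} exactly as in the proof of Lemma \ref{lemmaIndDom} to land in $\Omega_z$. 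The delicate points are choosing the trimmed digraph so that this parity count is guaranteed to be nonzero (a vanilla application of Lemma \ref{lemmaIndDom} only yields \emph{some} second transversal, not one through a prescribed edge) and checking that the colour assignment stays well-defined and injective once $x_2z$ is forced; I expect both to mirror the corresponding parts of the proof of Lemma \ref{lemmaIndDom} closely.
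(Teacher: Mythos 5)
Your framework --- partitioning $\Omega(C,\phi,S)$ by the $S$-neighbour of $x_2$, together with the identities $\Omega(C',\psi,S)=\Omega(C,\phi,S)$ and $\Omega(C,\phi,S\setminus\{v\})\subseteq\Omega(C,\phi,S)$, the invariance \eqref{eq:dinvariant}, and the drop-by-at-most-one bound for $d^*$ when a vertex is deleted from $S$ --- is sound and is genuinely close to the paper's own recursion. The gap is exactly where you flagged it: you need every $z\in N^+_{B(H_{C,\phi})}(x_2)\cap S$ to have $\Omega_z\neq\emptyset$, i.e.\ the \emph{specific} vertex $x_2$ (determined once you name $x_1\in S$) must be ``good'' in the sense that every admissible edge $x_2z$ lies in some transversal of $\Omega(C,\phi,S)$. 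This is not established, and there is no reason it should hold for an arbitrary prescribed vertex of $N_C(S)$. Your proposed rescue --- a lollipop rooted at $x_2z$ --- does not close the gap: the handshake argument only shows that the number of degree-one Hamiltonian paths in the auxiliary graph is \emph{even}, so it could be zero. The lollipop needs a seed, namely a known degree-one path (or Hamiltonian cycle) already using $x_2z$, and nothing in $J'$ or in $(C,\phi)$ provides one, because $C$ itself contains $x_1x_2$ rather than $x_2z$.

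The paper escapes this by proving a weaker, purely existential claim: there is \emph{some} $v_0\in N_C(S)$, with $v_0s_0\in C$, such that every edge of $G_{\phi(v_0s_0)}$ joining $v_0$ to $S$ lies in a transversal of $\Omega(C,\phi,S)$. The proof is a global contradiction argument: if every $v\in N_C(S)$ had a bad edge $e_v$, one collects these into a set $F$, forms the RYB-digraph $J$ whose non-red arcs are exactly $F$, observes that $S$ is locally $J$-dominating, and invokes Lemma~\ref{lemmaIndDom}. The resulting second transversal in $\Omega(C,\phi,S)$ uses only edges of $E(\overline{J})\cap E(G)$, and since $F=(E(\overline{J})\cap E(G))\setminus E(C)$, it must use some $e_v\in F$ --- contradicting badness. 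Once such a $v_0$ is known to exist, one re-indexes so that $v_0$ plays the role your $x_2$ plays, and your recursion then goes through essentially verbatim. The crucial missing ingredient is therefore this existence argument, not a prescribed-edge lollipop.
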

\begin{proof}

We prove this result by induction on $d$. Notice that the case $d=1$ follows directly from Lemma \ref{lemmaIndDom}.

Assuming $d \geq 2$, we claim that there exists a vertex $v_0 \in N_C(S)$, say $v_0s_0 \in C$, for which the following property holds:
\begin{equation}\label{eqmanycond}
   \text{Every edge in $E(G_{\phi(v_0 s_0)}) $ joining $v_0$ to $S$ belongs to a Hamiltonian transversal in $\Omega(C,\phi,S)$.} 
\end{equation} 
Supposing the claim is false, 
for each $v \in N_C(S)$, write $s_v \in S$ so that $vs_v \in E(C)$, 
and choose
an edge $vw = e_v \in E(G_{\phi(vs_v)}) $ such that $w \in S$ and 
$e_v$ is not in any Hamiltonian transversal of $\Omega(C,\phi,S)$. 
Letting $F := \{e_v \,:\, v \in N_C(S)\}$, notice that the definition of $e_v$ implies $F \cap C = \emptyset$. Consider the RYB-digraph $J$ comprised of the red edges in $H_{C,\phi}$ together with yellow and blue arcs corresponding to the edges in $F$. Observe that $J$ has been constructed so that $S$ is locally $J$-dominating. Lemma \ref{lemmaIndDom} then tell us that there is a Hamiltonian transversal $C' \in \Omega(C,\phi,S)$, distinct from $C$, such that $E(C') \subseteq E(\overline{J}) \cap E(G)$.  But $F = (E(\overline{J}) \cap E(G)) \setminus E(C)$, so the fact that $C' \neq C$ implies $C$ must use an edge in $F$, contradicting the definition of $F$ as comprised of edges contained in no element of $\Omega(C,\phi,S)$. This proves the claim.

 Let  $v_0 \in V \setminus S$ denote a vertex, adjacent in $C$ to $s_0 \in S$, satisfying \eqref{eqmanycond}. Fix an edge $e \in G_{\phi(v_0 s_0)}$ and let  $(C',\psi)$ denote the Hamiltonian transversal in $\Omega(C,\phi,S)$ containing $e$ guaranteed to exist by the claim in the previous paragraph. Denote by $s_1$ the unique vertex in $S$ incident to $v_0$ in $C'$ and let $S' = S \setminus \{s_1\}$. To apply induction to $S'$, we must show that $d_{C',\psi}^\ast(S') \geq d-1$. Indeed, we know from  \eqref{eq:dinvariant} that $d_{C',\psi}^\ast(S) \geq d$, while for each $x \in N_C(S)$ we have 
 $$N^+_{Y(H_{C',\psi})}(x) \cap S \subseteq (N^+_{Y(H_{C',\psi})}(x) \cap S') \cup \{s_1\} \text{ and }N^+_{B(H_{C',\psi})}(x) \cap S \subseteq (N^+_{B(H_{C',\psi})}(x) \cap S') \cup \{s_1\}.$$  
Thus, we see that $d_{C',\psi}^\ast(S') \geq d-1$, and we can apply the induction hypothesis 
to find a set $\mathcal{T}_e$ of $d!$ distinct Hamiltonian transversals. Observe that, by Remark \ref{remark:stayyy}, for each $C'' \in \mathcal{T}_e$, the two edges incident to $v_0$ in $C''$ are $e$ and the unique edge of $C$ joining $v_0$ to $V \setminus S$.

Now, since $d_{C,\phi}^\ast(S) \geq d$, there are at least $d+1$ edges in $G_{\phi(v_0s_0)}$ joining $v_0$ to a vertex of $S$,
so, we have at least $d+1$ choices for our fixed edge $e \in G_{\phi(v_0 s_0)}$.
For each such $e$ we get a set $\mathcal{T}_e$ of at least $d!$ Hamtilonian transversals. Moreover, by considering the pair of edges incident to $v_0$ in the various cycles, we see that $\mathcal{T}_e \cap \mathcal{T}_{e'} = \emptyset$ whenever $e \neq e'$.

This gives us a total of at least $(d+1) \cdot d! = (d+1)!$ distinct Hamiltonian transversals of $\mathcal G$. 
\end{proof}

\subsection{Proof of Theorem \ref{thmMain}}

In Theorem \ref{thmHamOut} we showed how to use a Hamiltonian transversal and a vertex set $S$ with particularly nice properties to find many Hamiltonian transversals. Now, we use the Lov\'{a}sz Local Lemma to show that, so long as our base graph $G$ has sufficiently large maximum degree and each $G_i \in \mathcal{G}$ has sufficiently large minimum degree, we may find such a set $S$. From there, it will be straightforward to complete the proof of Theorem \ref{thmMain}.

\begin{theorem}
Let $H=H_{C,\phi}$ and let $m$ and $r$ be positive integers satisfying $m \geq 262$ and $r \geq 7\sqrt{m \log m}+2$. If $\Delta(G) = m$  and each $v \in V$ satisfies $\min\{N^+_{B(H)}(v),N^+_{Y(H)}(v)\} \geq r$,  
then there exists a red-independent set $S \subseteq V$ 
satisfying
$d_{C,\phi}^*(S) \geq  \frac{r}{400}\sqrt{\frac{\log m}{m}}.$
\label{LLLresult}
\end{theorem}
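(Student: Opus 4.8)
The natural approach is a standard application of the Lovász Local Lemma (Theorem~\ref{LLL}). We will choose each vertex of $V$ to be in a candidate set $S$ independently with a suitable probability $p$, and then identify ``bad events'' that, when all avoided, guarantee both red-independence and the lower bound on $d_{C,\phi}^*(S)$. We will actually want to be a bit careful: the raw random set $S_0$ will likely fail red-independence, so either (i) we first pass to a red-independent subset by a deletion/alteration step, or (ii)—more in the spirit of the LLL—we directly define bad events ensuring red-independence. Since the red graph $R(H)$ has bounded degree (each vertex is in at most $4$ red edges, namely $x_ix_{i\pm1}$ and $x_ix_{i\pm2}$), option (ii) is clean: for each red edge $e = x_ix_j$, let $A_e$ be the event that both endpoints lie in $S_0$. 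For each vertex $x_i$, let $B_i$ be the event that $N^+_{Y(H)}(x_{i-1}) \cap S_0$ is ``too small'' (fewer than $d := \frac{r}{400}\sqrt{\frac{\log m}{m}}$ vertices), and let $B_i'$ be the analogous event for $N^+_{B(H)}(x_{i+1}) \cap S_0$. If we avoid all $A_e$, then $S := S_0$ is already red-independent; if we avoid all $B_i, B_i'$, then $d_{C,\phi}^*(S) \ge d$. One subtlety: avoiding the $A_e$ shrinks $S_0$, so the ``good'' events should be phrased in terms of the surviving set; the usual fix is to note that in the LLL outcome no red edge is fully inside $S_0$, so $S_0$ itself works, and the degree counts are exactly what the $B_i, B_i'$ control.

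**Key steps, in order.**
First, fix $p$; the right choice should be of order $\sqrt{\frac{\log m}{m}}$, so that $pr$ is of order $\frac{r}{\;}\sqrt{\frac{\log m}{m}}$, i.e.\ comparable to (a constant times) $d$, and so that $p^2 m$ (controlling the dependency weights of the $A_e$-type and $B$-type events against neighbors) is small—this is where $m \ge 262$ and the constant $7$ in $r \ge 7\sqrt{m\log m}+2$ get used. Second, bound $\Pr(A_e) = p^2$. Third, bound $\Pr(B_i)$: the set $N^+_{Y(H)}(x_{i-1})$ has size at least $r$, so $|N^+_{Y(H)}(x_{i-1}) \cap S_0|$ stochastically dominates a sum of $r$ independent indicators with mean $\mu \ge pr$; apply the Chernoff bound \eqref{eq:chernoff2} (or \eqref{eq:chernoff1}) with $\delta$ chosen so that $(1-\delta)\mu = d$, giving $\Pr(B_i) \le \exp(-\tfrac12\delta^2 \mu)$, which we want to be at most something like $m^{-c}$ for a constant $c > 2$. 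Same for $B_i'$. Fourth, set up the dependency graph: $A_e$ depends only on the $\le 2$ coordinates (endpoints of $e$); $B_i$ depends on the $\le m$ coordinates in $N^+_{Y(H)}(x_{i-1})$ (capped at $m$ since $\Delta(G)=m$). Hence each event is mutually independent of all events not sharing a coordinate, and each event has at most $O(m)$ many $A$-type neighbors, at most $O(1)$ — wait, at most $O(m)$ — $B$-type neighbors through shared vertices, so degree $O(m)$ in the dependency graph overall, with the more refined count being: an $A_e$ has few neighbors, a $B_i$ has up to $\sim 2m$ or so. Fifth, choose LLL weights: $x$-value $\approx 2p^2$ for the $A_e$ events and $x$-value $\approx \Pr(B_i)^{1/2}$ (or simply a common small value like $m^{-2}$) for the $B$-type events, and verify $\Pr(A) \le x_A \prod_{A' \sim A}(1-x_{A'})$ for each event; the products are over $O(m)$ terms each at least $1 - m^{-2}$ or $1 - 2p^2$, so the products are bounded below by a constant, and the inequalities reduce to checking $p^2 \lesssim x_A$ and $\Pr(B_i) \lesssim x_{B_i}$ with the stated numeric constants. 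Sixth, conclude via the LLL that with positive probability all bad events are avoided, yielding the desired $S$.

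**Main obstacle.**
The real work is the numerology: making the Chernoff estimate on $\Pr(B_i)$ and the dependency-degree bound compatible with the LLL inequality while keeping the constants sharp enough that $m \ge 262$ (rather than something much larger) suffices, and simultaneously landing the explicit bound $d^*_{C,\phi}(S) \ge \frac{r}{400}\sqrt{\frac{\log m}{m}}$ with exactly the constant $400$. Concretely, one must choose $p$, $\delta$, and the LLL weights $x_i$ so that (a) $(1-\delta)\,pr \ge \frac{r}{400}\sqrt{\frac{\log m}{m}}$, (b) $\exp(-\tfrac12\delta^2 pr) \le x_{B}\prod(1-x_{\bullet})$ with roughly $2m$ factors in the product, and (c) $p^2 \le x_A \prod(1-x_\bullet)$; balancing (a) against (b) forces $p$ to be a specific constant multiple of $\sqrt{\frac{\log m}{m}}$, and then (b) becomes a bound of the form $m^{-\gamma} \le x_B \cdot (\text{const})$ that must hold for all $m \ge 262$, which is where one checks that the exponent $\gamma$ exceeds the growth rate $\sim \log m$ coming from the $\sim m$ product terms — i.e.\ one needs $x_B$ like $m^{-2}$ and $\gamma > 2$, pinned down by the constant $7$. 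Everything else (red-independence, the coordinate/dependency structure, translating the LLL conclusion into the statement) is routine bookkeeping. I would expect the write-up to fix $p := \tfrac{1}{something}\sqrt{\tfrac{\log m}{m}}$ and $\delta$ close to, say, $\tfrac12$ or tuned, then grind the three inequalities, citing $m \ge 262$ at the one or two points where the estimate is tightest.
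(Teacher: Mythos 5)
Your overall plan is the same as the paper's: sample a random set $S_0$ by keeping each vertex independently with probability $p$ of order $\sqrt{\log m/m}$, define $A$-events for red edges whose two endpoints both land in $S_0$ and $B$-type events for vertices whose relevant yellow/blue out-neighborhood in $S_0$ is too small, and then invoke the asymmetric Lov\'asz Local Lemma. Where you go wrong is in the dependency bookkeeping and the resulting claim that the LLL products are ``bounded below by a constant.'' Both errors are consequential.

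First, the dependency degree of a $B$-type event is not $O(m)$. The event $B_i$ is determined by the (up to $m-1$) coordinates in $N^+_{Y(H)}(x_{i-1})$. Another $B$-type event $B_j$ depends on $B_i$ whenever their determining sets intersect; since a given vertex can lie in the determining set of roughly $m-1$ different $B$-events of each color, the number of $B$-type neighbors of $B_i$ is of order $2(m-1)^2$, not ``$\sim 2m$.'' The paper counts exactly this: at most $2(m-1)^2$ $y$-events and at most $4(m-1)$ $x$-events neighboring any fixed $y$-event.

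Second, and more importantly, because a $B$-event has about $4m$ many $A$-type neighbors each carrying LLL weight $x_A$ of order $p^2 \asymp \log m/m$, the product $\prod_{A'\sim B}(1-x_{A'})$ picks up a factor $(1-x_A)^{4m}\approx \exp(-4m\,x_A)$, which is a negative power of $m$, not a constant. If you take $x_A \approx 2p^2$ as you propose, this factor is roughly $m^{-2}$ and the LLL inequality for $B$-events becomes $\Pr(B) \lesssim m^{-2}\cdot m^{-2}\cdot e^{-2} = m^{-4}/e^2$, which is not achievable from the Chernoff estimate with $r\geq 7\sqrt{m\log m}+2$. The paper threads this needle by taking $x_A = 1.05\,p^2$, so the $A$-product contributes only about $m^{-1.05}$, making the target $\approx m^{-3.05}/e^2$. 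Matching that target also requires using the sharper Chernoff form \eqref{eq:chernoff1} with a large deviation parameter $\delta = 399/400$ (not $\delta$ near $\tfrac12$, and not the weaker bound \eqref{eq:chernoff2}, which you list as an acceptable alternative): the former yields $\Pr(B)\lesssim m^{-3.44}$ while the latter only gives $\approx m^{-1.74}$, which fails the inequality. So the structure of your argument is right, but the quantitative claims you make about the dependency graph and the LLL products would, as stated, cause the proof to fail; the delicate tuning of $x_A$ and the choice of Chernoff form are not optional bookkeeping but exactly where the theorem's hypotheses on $m$ and $r$ are spent.
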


\begin{proof}
By permuting indices we may assume that $\mathcal{G}$ is naturally indexed with respect to $(C,\phi)$. Let $p = \frac{1}{2}\sqrt{\frac{\log m}{m}}$. We will randomly construct a set $S \subseteq V$ by independently adding each vertex of $V$ to $S$ with probability $p$, then show that $S$ satisfies the condition given in the theorem statement with positive probability.

For each bidirectional edge $e \in R(H)$, let $A_e$ be the event that both endpoints of $e$ are added to $S$. We refer such an event an \textit{$x$-event}. Observe that, for each $e \in R(H)$,
$$\Pr(A_e) = p^2.$$
For each vertex $v \in V$, let $A_v$ to be the event that 
$|N^+_{Y}(v) \cap S| < \frac{pr}{400}$ and let $A_v'$ to be the event that 
$|N^+_{B}(v) \cap S| < \frac{pr}{400}$.  
We refer to such events as \textit{$y$-events}. By the Chernoff bound \eqref{eq:chernoff1}, for each $v \in V$ 
$$\Pr(A_v) \leq  \xi ^{pr} \text{ and } \Pr(A_v') \leq \xi ^{pr}, \text{ where } \xi = \frac{e^{-\frac{399}{400}}}{\left(\frac{1}{400}\right)^{\frac{1}{400}}}.$$
Notice that if no $x$-event and no $y$-event occurs, then $S$ is a non-empty red-independent set satisfying $d_{C,\phi}^*(S)~\geq~\frac{r}{800}\sqrt{\frac{\log m}{m}}$, as desired.

Each $y$-event corresponds to a monochromatic star $T$ in $H$ in which each arc directed away from a ``center'' vertex. Because $G$ has maximum degree $m$ and, by Definition \ref{defRYB}, neither of the red edges in $E(H) \cap E(G)$ incident to a vertex $v \in V$ can underlie a blue or yellow arc, so each $y$-event is uniquely determined by a color (either blue or yellow) and a vertex (corresponding to the center of $T$).

To apply the local lemma, we count the dependencies between each of our events. First, consider an $x$-event $A_e$. Because each vertex is incident to four red edges, $A_e$ is dependent with at most $6$ other $x$-events. Moreover, the star $T$ corresponding to any $y$-event dependent with $A_e$ must contain one of the endpoints of $e$. We have two choices for the color of $T$ and, for each color, at most $2(m-1)$ choices for its center. Thus, the number of $y$-events dependent with $A_e$ is at most $4m-4$.

Next, consider a $y$-event $A_v$ and let $T_0$ denote the corresponding yellow star. Any $x$-event dependent with $A_v$ must correspond to a red edge incident to a vertex in $T_0$, so $A_v$ depends on at most $4(m-1)$ $x$-events. If $T$ is the star corresponding to a $y$-event dependent with $A_v$ then $V(T) \cap V(T_0) \neq \emptyset$.  As both $T$ and $T_0$ have at most $m-1$ vertices, we have at most $(m-1)^2$ choices for the center of $T$. Because we have two choices for the color of $T$, we see that $A_v$ is dependent with at most $2(m-1)^2$ $y$-events. Similarly, every $y$-event of the form $A_v'$ is dependent with at most $4m-4$ $x$-events and at most $2(m-1)^2$ $y$-events.

Therefore, if we assign a real number $x$ to all $x$-events and a real number $y$ to all $y$-events, then Theorem \ref{LLL} tells us that the following pair of inequalities imply that with positive probability no $x$-event occurs and no $y$-event occurs:
\begin{eqnarray}
\label{firstIn}
0 &<& x (1-x)^{6}(1-y)^{4m-4}-p^2; \\
\label{secondIn}
\xi^{pr}& <& y(1-x)^{4m-4}(1-y)^{2(m-1)^2}.
\end{eqnarray}
Let $x = 1.05p^2$ and $y = \frac{1}{m^2}$. Under these conditions, the right hand side of (\ref{firstIn}) approaches $\frac{p^2}{20}>0$ from above as $m$ grows arbitrarily large. Indeed, one may check that it is positive for $m \geq 262$. Looking to (\ref{secondIn}), we see that as $m$ gets arbitrarily large, the left hand side approximately approaches $m^{-3.43}$ from below, while the right hand side of approaches 
$$\frac{1}{m^2} m^{-1.05} e^{-2} = \frac{m^{-3.05}}{e^2}$$
from above. Indeed, one may check that this equation holds for all $m\geq194$.
Therefore, if $m \geq 262$, then we avoid all bad events with positive probability, and hence there must exist a red-independent set $S \subseteq V$  for which $d_{C,\phi}^*(S) \geq \frac{r}{400}\sqrt{\frac{\log m}{m}}$.
\end{proof}

We now finish the proof of Theorem \ref{thmMain}. In fact, we may prove the following stronger result, from which Theorem \ref{thmMain} follows directly.

\begin{theorem}
Let $m \geq 262$ be an integer, let $G$ be an $n$-vertex graph  of maximum degree $m$, and let $\mathcal{G}~=~\{G_1, \dots, G_n\}$ be a family of subgraphs of $G$, each of minimum degree $t \geq 7\sqrt{m \log m}$. If $\mathcal G$ contains a Hamiltonian transversal, 
then $\mathcal G$ contains at least $\left \lfloor \frac{t-2}{400}\sqrt{\frac{\log m}{m}} + 1 \right \rfloor!$ distinct Hamiltonian transversals.
\end{theorem}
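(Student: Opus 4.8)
The plan is to deduce the statement from the two main technical results of this section, Theorem~\ref{LLLresult} and Theorem~\ref{thmHamOut}: the former produces, from degree hypotheses on the full RYB-digraph, a red-independent set $S$ with $d^*_{C,\phi}(S)$ large, and the latter turns such an $S$ into $(d+1)!$ distinct Hamiltonian transversals. So essentially all of the combinatorial and probabilistic work has already been carried out, and what remains is to feed the hypotheses in correctly.

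First I would use the hypothesis to fix a Hamiltonian transversal $(C,\phi)$ of $\mathcal{G}$, and then relabel $V = \{x_1,\dots,x_n\}$ and reindex $\mathcal{G}$ so that $\mathcal{G}$ is naturally indexed with respect to $(C,\phi)$; this costs nothing, as permuting indices does not change the number of Hamiltonian transversals. Write $H = H_{C,\phi}$ for the full RYB-digraph of $(\mathcal{G},C,\phi)$.

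Next I would translate the minimum-degree condition into out-degree bounds in $H$. By Definition~\ref{defRYB}, $N^+_{Y(H)}(x_i) = N_{G_i}(x_i)\setminus\{x_{i-1},x_{i+1}\}$ and $N^+_{B(H)}(x_i) = N_{G_{i-1}}(x_i)\setminus\{x_{i-1},x_{i+1}\}$. Since $G_i$ and $G_{i-1}$ have minimum degree at least $t$ and at most two neighbors of $x_i$ are deleted in each case (in fact $x_ix_{i+1}\in E(G_i)$ and $x_{i-1}x_i\in E(G_{i-1})$, being the transversal edges of colors $i$ and $i-1$), we obtain $\min\{|N^+_{Y(H)}(x_i)|,\,|N^+_{B(H)}(x_i)|\}\ge t-2$ for every $i\in[n]$. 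Consequently Theorem~\ref{LLLresult} applies with $r := t-2$ — its numeric requirement being met via $t\ge 7\sqrt{m\log m}$ together with $m\ge 262$ — and yields a red-independent set $S\subseteq V$ with $d^*_{C,\phi}(S)\ge \frac{t-2}{400}\sqrt{\frac{\log m}{m}}$.

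Finally I would set $d := \bigl\lfloor \frac{t-2}{400}\sqrt{\frac{\log m}{m}}\bigr\rfloor$. Since $d^*_{C,\phi}(S)$ is a nonnegative integer that is at least the real number $\frac{t-2}{400}\sqrt{\frac{\log m}{m}}$, it is at least $d$, so Theorem~\ref{thmHamOut} gives that $\Omega(C,\phi,S)$ contains at least $(d+1)!$ distinct Hamiltonian transversals. As every element of $\Omega(C,\phi,S)$ is in particular a $\mathcal{G}$-transversal whose edges form a Hamiltonian cycle, $\mathcal{G}$ contains at least $(d+1)! = \bigl\lfloor \frac{t-2}{400}\sqrt{\frac{\log m}{m}}+1\bigr\rfloor!$ distinct Hamiltonian transversals, as claimed. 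The genuinely hard steps — the lollipop/parity argument behind Lemma~\ref{lemmaIndDom}, the inductive multiplication in Theorem~\ref{thmHamOut}, and the Lov\'asz Local Lemma calculation in Theorem~\ref{LLLresult} — are already in place, so I expect the only point needing care here to be the degree translation above: accounting for the loss of at most $2$ in each out-degree and checking that this still clears the numeric threshold of Theorem~\ref{LLLresult} (a check that is in any case vacuous when $t$ is so close to $7\sqrt{m\log m}$ that the asserted bound reduces to $1!$).
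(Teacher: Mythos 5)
Your proof is correct and follows essentially the same route as the paper's: translate the minimum-degree hypothesis on the $G_i$ into the out-degree bound $\min\{|N^+_{Y(H)}(x_i)|,|N^+_{B(H)}(x_i)|\}\ge t-2$ in the full RYB-digraph, invoke Theorem~\ref{LLLresult} to obtain a red-independent $S$ with $d^*_{C,\phi}(S)\ge\frac{t-2}{400}\sqrt{\log m/m}$, and then apply Theorem~\ref{thmHamOut}. Your closing observation is apt — strictly, Theorem~\ref{LLLresult} asks for $r\ge 7\sqrt{m\log m}+2$, while $r=t-2\ge 7\sqrt{m\log m}-2$, but this small slack (present in the paper's proof as well) only matters in regimes where the asserted factorial bound is $1!$ and hence trivially satisfied by $(C,\phi)$ itself.
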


\begin{proof}
Let $C$ be a Hamiltonian transversal of $\mathcal{G}$ and assume without loss of generality that $\mathcal G$ is naturally indexed with respect to $(C,\phi)$. Every vertex $x_i \in V$ is incident to at least $t$ edges of $G_i$, at most two of which belong to the set $\{x_i x_{i-1}, x_i x_{i+1}\}$. Therefore, $x_i$ is the tail of at least $t-2$ yellow arcs in $H_{C,\phi}$. Similarly, each vertex $x_i$ is the tail of at least $t-2$ blue arcs in $H_{C,\phi}$. Therefore, for $m \geq 262$, $H$ satisfies the conditions of Theorem \ref{LLLresult}, and we can find a red-independent set $S \subseteq V$ such that $d_{C,\phi}^*(S) \geq \frac{t-2}{400}\sqrt{\frac{\log m}{m}}$. The result then follows by Theorem \ref{thmHamOut}.
\end{proof}

\subsection{When the base graph is complete}\label{subsec:hamcomplete}

For the rest of this section we let $\mathcal G = \{G_1, \dots, G_n\}$ denote a family of subgraphs of $K_n$ in which each graph $G_i \in \mathcal{G}$ has minimum degree at least $n/2$. In \cite{JoosKim2020}, Joos and Kim show that every such family has a Hamiltonian transversal. For the rest of this section, let $(C,\phi)$ denote a fixed Hamiltonian transversal, whose existence is given by Theorem 1 of \cite{JoosKim2020}, and let $H:=H_{\mathcal{G},C,\phi}$. Notice that this is precisely the setting of Theorem \ref{thmDiracHam}.  Our proof of this theorem relies heavily upon Theorem \ref{thmHamOut}, with the remaining difficult work being the following probabilistic proof that we can find a vertex set satisfying the hypotheses of that theorem.

\begin{lemma}\label{lemmaInS}
Suppose that each $G_i \in \mathcal{G}$ has minimum degree at least $cn$ for some $c \geq\frac{1}{2}$. For sufficiently large $n$, then there exists a red-independent set $S \subseteq V$ such that 
$$d_{C,\phi}^\ast(S) \geq \frac{c^2 }{16}n - \frac{15c^2}{8} \sqrt{n \log n} .$$
\end{lemma}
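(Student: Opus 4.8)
The plan is to mimic the proof of Theorem \ref{LLLresult}, again constructing $S$ by a random process and applying the Lov\'asz Local Lemma, but now exploiting the fact that the base graph is $K_n$ so that every $G_i$ has minimum degree at least $cn$ in a graph on only $n$ vertices. First I would fix the natural indexing with respect to $(C,\phi)$ and include each vertex of $V$ in $S$ independently with probability $p$, where $p$ is of order $\sqrt{(\log n)/n}$ (say $p = \frac{c}{4}\sqrt{\frac{\log n}{n}}$ or some similar normalization chosen to make the final bound come out to $\frac{c^2}{16}n - O(\sqrt{n\log n})$). The ``bad events'' are the same two families as before: $x$-events $A_e$ for $e \in R(H)$, saying both endpoints of a red edge land in $S$, each with probability $p^2$; and $y$-events $A_v, A_v'$ saying that $|N^+_Y(H)(x_{i-1})\cap S|$ or $|N^+_{B(H)}(x_{i+1})\cap S|$ is too small. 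Here each vertex $x_i$ is the tail of at least $cn-2$ yellow arcs and at least $cn-2$ blue arcs, so by the Chernoff bound \eqref{eq:chernoff2} (or \eqref{eq:chernoff1}), $\Pr(A_v),\Pr(A_v') \leq \exp(-\tfrac12\delta^2 p(cn-2))$ for a suitable deviation parameter $\delta$; choosing $\delta$ bounded away from $1$ and tracking the $\sqrt{n\log n}$ error term gives the stated lower bound on $d_{C,\phi}^*(S)$.

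The dependency structure differs from the earlier lemma only in that the maximum degree is now $n-1$ rather than $m$, so I would recount: each $x$-event is dependent with $O(1)$ other $x$-events (the red edges form a fixed bounded-degree graph) and with $O(n)$ $y$-events; each $y$-event is dependent with $O(n)$ $x$-events and $O(n^2)$ $y$-events, since a monochromatic out-star has at most $n-1$ vertices. Then I would set up the asymmetric Local Lemma (Theorem \ref{LLL}) with weights $x \asymp p^2$ on $x$-events and $y \asymp 1/n^2$ on $y$-events, and verify the two inequalities $\Pr(A_e) < x(1-x)^{O(1)}(1-y)^{O(n)}$ and $\Pr(A_v) < y(1-x)^{O(n)}(1-y)^{O(n^2)}$ hold for $n$ sufficiently large. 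The first is immediate since $(1-y)^{O(n)} \to 1$. The second is the crux: the right-hand side behaves like $n^{-2}\cdot e^{-O(1)}\cdot(1-y)^{O(n^2)}$, and since $y = \Theta(n^{-2})$ we have $(1-y)^{O(n^2)} = \Theta(1)$, so the right side is $\Theta(n^{-2})$; meanwhile the left side is $\exp(-\Theta(p n)) = \exp(-\Theta(\sqrt{n\log n}))$, which decays faster than any polynomial in $n$. Hence the inequality holds for all large $n$, and the Local Lemma yields a red-independent $S$ with no $y$-event, i.e.\ with $d_{C,\phi}^*(S)$ at least the claimed quantity.

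The main obstacle I anticipate is purely bookkeeping rather than conceptual: getting the constant in front of $n$ to be exactly $\frac{c^2}{16}$ and the error term to be $\frac{15c^2}{8}\sqrt{n\log n}$ requires choosing $p$ and the Chernoff deviation $\delta$ carefully and then chasing the resulting expression $(1-\delta)p(cn-2)$ through to a clean closed form. Concretely, I would write $d_{C,\phi}^*(S) \geq (1-\delta) p(cn-2)$ on the good event, pick $p$ so that the leading term is $\frac{c^2}{16}n$ (this forces $p \approx \frac{c}{16}\cdot\frac{1}{1-\delta}$ up to the $\delta$ factor, so really $p$ should be a constant multiple of $\sqrt{(\log n)/n}$ chosen after $\delta$), and absorb everything else into the $\sqrt{n\log n}$ slack; since we only need the inequality for ``sufficiently large $n$'' there is room to be generous. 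A minor secondary point is confirming that the good event also forces $S \neq \emptyset$, which is automatic: if $S$ were empty then trivially some $y$-event would occur (the relevant neighborhoods would be empty, hence have size $0 < $ threshold), so avoiding all bad events guarantees nonemptiness. Once $S$ is in hand, Theorem \ref{thmHamOut} applied with $d = \left\lceil \frac{c^2}{16}n - \frac{15c^2}{8}\sqrt{n\log n}\right\rceil$ (or its floor, matching Theorem \ref{thmDiracHam}'s statement) finishes the deduction, but that is the content of the next step, not this lemma.
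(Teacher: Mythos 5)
Your proposal will not give the claimed bound, and the root cause is a concrete conflict between your choice of $p$ and the target. You set $p = \Theta\bigl(\sqrt{(\log n)/n}\bigr)$, mimicking Theorem~\ref{LLLresult}. With that choice, the expected size of $N^+_{Y(H)}(v) \cap S$ is about $p \cdot cn = \Theta(\sqrt{n\log n})$, so any concentration argument, LLL or otherwise, can only certify $d^*_{C,\phi}(S) = \Theta(\sqrt{n\log n})$, not $\frac{c^2}{16}n - O(\sqrt{n\log n})$. In fact, applying Theorem~\ref{LLLresult} verbatim with $m = n-1$ and $r = cn-2$ already gives exactly $\Theta(\sqrt{n\log n})$, and your computation is just a re-derivation of that. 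You notice this tension yourself in the last paragraph (``this forces $p \approx \frac{c}{16(1-\delta)}$ \ldots\ so really $p$ should be a constant multiple of $\sqrt{(\log n)/n}$''), but these two statements are incompatible: a constant and a quantity of order $\sqrt{(\log n)/n}$ cannot both describe $p$.

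The natural fix — take $p$ to be a genuine constant, say $p = c/8$ — breaks the Local Lemma step, and this is the real obstruction. When $p$ is constant, $x$ must be taken of order $p^2 = \Theta(1)$ in the asymmetric LLL, and a $y$-event is codependent with $\Theta(n)$ $x$-events. The factor $(1-x)^{\Theta(n)}$ on the right side of the second LLL inequality is therefore $\exp(-\Theta(n))$, and one can check that its exponent dominates the exponent of the Chernoff bound on $\Pr(A_v)$ for $\delta < 1$, so the inequality fails. The paper avoids the LLL entirely here: with $p = c/8$ constant it adds each vertex to $S$ independently, then \emph{deletes} both endpoints of every red edge that lands in $S$. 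A Chernoff bound gives concentration of $|N^+_{Y(H)}(v) \cap S|$ and $|N^+_{B(H)}(v) \cap S|$ before deletion, Markov's inequality bounds the total number of deleted vertices (expected $\leq 4np^2$), and a union bound over the $2n$ neighborhood-events plus the single deletion-event shows that with positive probability every vertex retains at least $(1-\epsilon)\frac{c^2n}{8} - (1+\epsilon)4np^2$ yellow and blue out-neighbors in $S$, where $\epsilon = 10\sqrt{(\log n)/n}$; plugging in $p = c/8$ yields exactly $\frac{c^2}{16}n - \frac{15c^2}{8}\sqrt{n\log n}$. This ``alteration'' argument succeeds precisely because, in the dense regime, removing a (random) set of expected size $O(n p^2)$ from every $\Theta(n)$-sized out-neighborhood is a negligible perturbation — something the Local Lemma bookkeeping cannot exploit.
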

\begin{proof}
Let $H = H_{C,\phi}$. We construct $S$ randomly in two steps: (step 1) add each vertex $v \in V(G)$ to $S$ with probability $p = \frac{c}{8}$, then (step 2) remove from $S$ every pair $v,w \in S$ satisfying $vw \in R(H)$. Notice that this process is guaranteed to create a red-independent set.

After the first step, the expected size of $N^+_{Y(H)}(v) \cap S$ is at least $\mu = \frac{c^2n}{8}$ for every $v \in V$. Let $ \epsilon = 10\sqrt{ \frac{\log n}{n} }$ and note that, so long as $n \geq 650$, we have $\epsilon < 1$. By a Chernoff bound \eqref{eq:chernoff2}, the probability that $v$ has fewer than $(1 - \epsilon)\mu $ yellow neighbors in $S$ is at most $e^{ - \frac{1}{2} \epsilon^2 \mu }$.  Similarly, the probability that $v$ has fewer than $(1 - \epsilon)\mu $ blue neighbors in $S$ is at most $e^{- \frac{1}{2} \epsilon^2 \mu }$.

In the second step, the expected number of removed vertices is at most $2p^2 |R(H)| = 4np^2$. Therefore, by Markov's inequality, the probability that we delete more than $(1+\epsilon)4np^2$ vertices from $S$ in this way is at most 
$\frac{1}{1 + \epsilon} < 1 - \frac{1}{2} \epsilon.$ 
Applying the union bound, we see that every $v \in V$ satisfies 
\begin{equation}\label{eqDirac}\min\{|N^+_{Y(H)}(v) \cap S|,|N^+_{B(H)}(v) \cap S|\} \geq (1 - \epsilon) \frac{c^2n}{8} - (1+\epsilon)4np^2\end{equation}
with probability at least $\frac{1}{2} \epsilon - 2ne^{- \frac{1}{2} \epsilon^2 \mu } \geq \frac{1}{\sqrt{n}}(5\sqrt{\log(n)}-2) $, which is positive for all $n \geq 1$. Plugging in the values given above for $c$, $p$, and $\epsilon$ into the right-hand side of \eqref{eqDirac} and, we get the desired lower bound.
\end{proof}

It is worth noting that, although the proof of Lemma \ref{lemmaInS} works for $n \geq 650$, for $n\leq 8100$ the lower bound it gives for $d^\ast_{C,\phi}(S)$ is negative, meaning the result in these cases is trivial. However, as $n$ grows arbitrarily large, this bound approaches $\frac{c^2}{16}n$. %
From here, it is easy to complete the proof of
Theorem \ref{thmDiracHam}. For any $\epsilon>0$, we may use Lemma \ref{lemmaInS}   to find a red-independent set $S \subseteq V$ such that  $d_{C,\phi}^\ast(S) \geq  \frac{c^2n}{16+\epsilon}$ so long as $n$ is sufficiently large. That $\mathcal{G}$ has at least 
$\left\lceil \frac{c^2 n}{16+\epsilon}  \right\rceil!$
Hamiltonian transversals 
then follows by Theorem \ref{thmHamOut}.

\section{Perfect matching transversals}
\label{secPM}

Turning our attention to perfect matching transversals, we will henceforth use $G$ to denote a graph on $2n$ vertices. As above, $\mathcal G = \{G_1, \dots, G_n\}$ is a family of subgraphs of $G$ and $V = V(G)$. We now assume that $\mathcal G$ has a perfect matching transversal, which we denote by $(M,\phi)$, where $M$ is a matching and $\phi$ is an associated bijection between $M$ and $[n]$. We say that $\mathcal{G}$ is \emph{naturally indexed} with respect to $M$ if, setting labels $V = \{x_1,\ldots,x_n,y_1,\ldots,y_n\}$, we have $M = \{x_1y_1, x_2y_2, \dots, x_ny_n\}$ with $x_iy_i \in E(G_i)$ for every $i \in [n]$.

\begin{definition}
Assuming $\mathcal{G}$ is naturally indexed with respect to $M$, we say that $H$ is the \emph{full RB-digraph} of $(\mathcal{G},M,\phi)$, denoted $H_{\mathcal{G},M,\phi}$, if $V(H)= V$ and the following three conditions hold for each $i \in [n]$:
\begin{itemize}
    \item $H$ has a red bidirectional edge $x_i y_i$, 
    \item $H$ has a blue arc $x_iy_j$ whenever $x_iy_j \in E(G_i)$ and $j \neq i$, 
    \item $H$ has a blue arc $y_ix_j$ whenever $y_ix_j \in E(G_i)$ and $j \neq i$.
\end{itemize} 
More generally, we refer to any spanning subgraph of $H_{\mathcal{G},M,\phi}$ which contains all of the red edges of $H_{\mathcal{G},M,\phi}$ as an \emph{RB-digraph} of $(\mathcal{G},M,\phi)$.
\label{defRBgraph}\end{definition}
As we will treat $\mathcal{G}$ as fixed throughout most of this section, we will often omit the $\mathcal{G}$ in the subscript of $H_{M,\phi} = H_{\mathcal{G},M,\phi}$. Similarly, we will talk of RB-digraphs of $(M,\phi)$, omitting the $\mathcal{G}$. Given an RB-digraph $J \subseteq H_{M,\phi}$, we use $R(J)$ and $B(J)$, respectively, to denote the red edges and blue arcs in $J$. As above, we write $\overline{J}$ for the undirected graph underlying $J$. We call a set $S \subseteq V$ \emph{red-independent} if no two vertices of $S$ are adjacent via a red edge in $H_{M,\phi}$. Notice that we need not distinguish between various RB-digraphs of $(M,\phi)$ in discussing red-independence as all RB-digraphs have the same set of red edges. 

\subsection{Technical conditions} 

As in the previous section, we begin with a technical condition which guarantees the existence of two perfect matching transversals. In this case, however, the condition will be much easier to establish. Given a maximal red-independent set $S$, let $\Omega(M,\phi,S)$ denote the set of perfect matching transversals $(M',\psi)$ such that all edges in  $M'$ have one endpoint in $S$ and one endpoint in $V \setminus S$ and, for each $v \in S$, the edge $e$ in $M$ incident to $v$ and the edge $e'$ in $M'$ incident to $v$ satisfy $\phi(e) = \psi(e')$. Notice that, because $S$ is maximal, we have $|S|=|V \setminus S| = n$.

\begin{lemma}
\label{lemmaPMcond}
Let $S \subseteq V$ be a maximal red-independent set and let $J$ be an RB-digraph of $(M,\phi)$. If $N^+_{B(J)}(v) \setminus S \neq \emptyset$ for every $v \in S$, then there is an $(M',\psi) \in \Omega(M,\phi,S)$, distinct from $(M,\phi)$, comprised of edges from $E(\overline{J})$.
\end{lemma}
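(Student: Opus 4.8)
The plan is to recast the statement as the existence of an $M$-alternating cycle in a bipartite graph between $S$ and $V\setminus S$, which will follow immediately from a functional-digraph argument once the bookkeeping is set up.

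First I would unpack what maximal red-independence buys us: the red edges of $H_{M,\phi}$ are exactly $\{x_iy_i : i\in[n]\}$, so red-independence means at most one of $x_i,y_i$ lies in $S$ for each $i$, while maximality means at least one does; hence exactly one of $x_i,y_i$ lies in $S$. Write $z_i$ for this vertex and $z_i'$ for its partner, so that $S=\{z_1,\dots,z_n\}$, $V\setminus S=\{z_1',\dots,z_n'\}$, and $z_iz_i'=x_iy_i\in E(G_i)\cap R(J)$. Next I would observe that any $(M',\psi)\in\Omega(M,\phi,S)$ is forced to colour the $M'$-edge at $z_i$ with the colour $i$, so that edge must lie in $G_i$, have one end at $z_i$, and have its other end in $V\setminus S$; by Definition \ref{defRBgraph} the only such candidates are $z_iz_i'$ itself and the blue arcs of $J$ emanating from $z_i$. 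This suggests the auxiliary digraph $D$ on vertex set $[n]$ with an arc $i\to j$ (for $i\neq j$) whenever $J$ has the blue arc $z_i\to z_j'$. Since no blue arc of $H_{M,\phi}$ joins $x_i$ to $y_i$, the hypothesis $N^+_{B(J)}(z_i)\setminus S\neq\emptyset$ guarantees a blue arc of $J$ out of $z_i$ landing on some $z_j'$ with $j\neq i$; hence $D$ has minimum out-degree at least $1$, and therefore contains a directed cycle $i_1\to i_2\to\cdots\to i_k\to i_1$, necessarily with $k\geq 2$ since $D$ has no loops.

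I would then translate this cycle into the $2k$-edge cycle
$$z_{i_1}\mathbin{-}z_{i_2}'\mathbin{-}z_{i_2}\mathbin{-}z_{i_3}'\mathbin{-}\cdots\mathbin{-}z_{i_k}\mathbin{-}z_{i_1}'\mathbin{-}z_{i_1}$$
in $\overline{J}$, which alternates the blue arcs $z_{i_a}z_{i_{a+1}}'$ with the red matching edges $z_{i_a}z_{i_a}'$ (indices mod $k$). Forming $M':=\bigl(M\setminus\{z_{i_a}z_{i_a}':a\in[k]\}\bigr)\cup\{z_{i_a}z_{i_{a+1}}':a\in[k]\}$ gives a perfect matching of $G$ distinct from $M$, every edge of which has exactly one end in $S$ and lies in $E(\overline{J})$. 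Defining $\psi$ by assigning colour $i$ to the $M'$-edge at $z_i$ for every $i\in[n]$ yields a bijection onto $[n]$ (each colour is used exactly once, since $S$ meets each pair $\{x_i,y_i\}$ exactly once), it satisfies the compatibility condition with $\phi$ built into the definition of $\Omega(M,\phi,S)$, and it is a legal transversal because $z_iz_i'\in E(G_i)$ while each blue arc $z_{i_a}\to z_{i_{a+1}}'$ of $J$ is by Definition \ref{defRBgraph} an edge of $G_{i_a}$. Hence $(M',\psi)\in\Omega(M,\phi,S)\setminus\{(M,\phi)\}$ is the desired second perfect matching transversal.

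There is no genuine obstacle here; the only thing to be careful about is the final verification that $\psi$ is well-defined, bijective onto $[n]$, and consistent with the constraints defining $\Omega(M,\phi,S)$. In contrast with the Hamiltonian analogue (Lemma \ref{lemmaIndDom}), no lollipop or parity argument is needed, because the desired alternating cycle is produced directly from the out-degree condition.
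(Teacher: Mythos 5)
Your proof is correct and takes essentially the same approach as the paper: the core idea in both cases is to find a cycle in $\overline{J}$ alternating between red matching edges and blue arcs out of $S$, take the symmetric difference with $M$, and colour the resulting matching consistently with $\phi$ at each vertex of $S$. The only cosmetic difference is in how the alternating cycle is located: the paper grows a walk inside $J$ directly (red edge, then blue arc out of $S$, then red edge, \dots) until a vertex repeats, whereas you first contract each matching pair $\{z_i,z_i'\}$ to a single vertex of an auxiliary digraph $D$ on $[n]$ and invoke the fact that a digraph with minimum out-degree one contains a directed cycle, then unfold. Both are the same pigeonhole/walk argument in slightly different clothing, and your bookkeeping for why $\psi$ is well-defined, bijective, and a valid transversal matches the paper's.
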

\begin{proof}
The work of this proof amounts to finding an even directed cycle $C$ in $J$ that alternates between red and blue arcs. Given such a cycle $C$, the symmetric difference of $M$ and $C$, which we denote by $M^\prime$, is the edge set of the second perfect matching transversal we seek. Indeed, it is easy to check that $M^\prime$ is a perfect matching between $S$ and $V \setminus S$. Furthermore, for each $s \in S$ and each edge $sv \in M \setminus M'$, there is exactly one edge $su \in M' \setminus M$ obtained from the blue arc pointing out of $s$. By Definition \ref{defRBgraph}, $su \in G_{\phi(sv)}$, so defining $\psi$ to be consistent with $\phi$ at each vertex in $S$ yields $(M',\psi) \in \Omega(M,\phi,S)$.

To find $C$, we construct a bipartite digraph $P\subseteq J$ one edge at a time; our goal is to find $C$ as a subgraph of $P$. We begin by choosing an arbitrary red edge $u_0v_0$ with $v_0 \in S$ and add $u_0v_0$ to $P$. In general, each time a red edge $uv$, with $v \in S$, is added to $P$, we then add a blue arc $vw$ with $w \not \in S$ to $P$. Notice that such an arc is guaranteed to exist by the hypothesis that each $v\in S$ has a blue out-neighbor outside of $S$. If the red edge incident to $w$ has already been added to $P$, then $P$ must contain an even cycle that alternates between red edges and blue edges. Otherwise, add to $P$ the red edge incident to $w$ and return to the previous step. As $J$ is a finite graph, this process must ultimately terminate, and hence we will eventually find an even cycle $C$ in $J$ that alternates between red edges and blue edges.
\end{proof}

Let $H = H_{M,\phi}$. For every maximal red-independent set $S\subseteq V$, we define
\[d_{M,\phi}^\times(S) := \min_{v \in S} \{| N_{B(H)}^+(v) \setminus S|\}.\]
We claim that, for every $(M',\psi) \in \Omega(M,\phi,S)$, we have
\begin{equation}\label{eq:PMswitch}
    d_{M,\phi}^\times(S) = d_{M',\psi}^\times(S).
\end{equation}
Indeed, let $C$ denote the symmetric difference of $M$ and $M'$ and observe that $C$ is a collection of even cycles alternating between edges of $M$ and edges of $M'$. Letting $H'=H_{M',\psi}$, we want to show that $|N^+_{B(H)}(v) \setminus S| = |N^+_{B(H')}(v) \setminus S|$ for every $v \in V(C) \cap S$. Given such a vertex $v$, the red edge incident to $v$ in $H$, say $vu$, becomes a blue arc pointing away from $v$ in $H'$. Similarly, the blue arc pointing out of $v$ in $C$, say $vw$, becomes a red edge in $H'$. Notice that $w \not\in S$ by construction and, because $v \in S$, $uv \in R(H)$, and $S$ is red-independent, $u \not\in S$. So, in passing from $H$ to $H'$, $v$ loses exactly one blue out-neighbor in $V \setminus S$, namely $w$, while gaining one exactly one blue out-neighbor in $V \setminus S$, namely $u$.

The astute reader may notice that, following the structure of Section \ref{secHam}, we are now set to establish an analogue of Theorem \ref{thmHamOut} for perfect matching transversals.

\begin{theorem}
Let $S \subseteq V$ be a maximal red-independent set. If  $d_{M,\phi}^\times(S) \geq d$, then $\mathcal G$ contains at least $(d + 1)!$ distinct perfect matching transversals.
\label{thmOutdeg}
\end{theorem}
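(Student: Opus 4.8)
The plan is to mimic the induction-on-$d$ argument used to prove Theorem~\ref{thmHamOut}, with Lemma~\ref{lemmaPMcond} playing the role that Lemma~\ref{lemmaIndDom} played there. The base case $d = 1$ is immediate: if $d_{M,\phi}^\times(S) \geq 1$, then every $v \in S$ has a blue out-neighbor outside $S$, so Lemma~\ref{lemmaPMcond} produces a second perfect matching transversal, giving $2 = (1+1)!$ in total. For the inductive step, assume $d \geq 2$. As in the proof of Theorem~\ref{thmHamOut}, the first goal is to locate a single vertex $v_0 \in S$, incident to $v_0 u_0 \in M$, with the property that \emph{every} blue arc of $H_{M,\phi}$ pointing from $v_0$ to a vertex outside $S$ — equivalently, every edge of $G_{\phi(v_0 u_0)}$ joining $v_0$ to $V \setminus S$ — lies in some perfect matching transversal in $\Omega(M,\phi,S)$. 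To find such a $v_0$, suppose no such vertex exists: then for each $v \in S$ choose a blue arc $e_v$ from $v$ to some $w \notin S$ with $e_v$ belonging to no transversal in $\Omega(M,\phi,S)$; form the RB-digraph $J$ consisting of the red edges of $H_{M,\phi}$ together with exactly these arcs $\{e_v : v \in S\}$. By construction $N^+_{B(J)}(v) \setminus S \neq \emptyset$ for all $v \in S$, so Lemma~\ref{lemmaPMcond} gives an $(M',\psi) \in \Omega(M,\phi,S)$ distinct from $(M,\phi)$ using only edges of $\overline{J}$; since the non-red edges of $\overline{J}$ are exactly the $e_v$, and $M'$ must differ from $M$, the transversal $M'$ uses some $e_v$, contradicting its choice. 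This establishes the existence of $v_0$.

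Next, for each of the (at least) $d+1$ blue arcs $e$ from $v_0$ to $V \setminus S$, let $(M_e, \psi_e) \in \Omega(M,\phi,S)$ be a transversal containing $e$, let $s_e \in V\setminus S$ be the endpoint of $e$ other than $v_0$ — wait; more carefully, $v_0 \in S$ and $e$ points to some $w_e \notin S$. Here I need to be careful about which set shrinks: in the Hamiltonian case one removes the new partner of $v_0$ from $S$. In the matching case, the natural move is to pass to $S' = S \setminus \{v_0\}$, but $S'$ need not be maximal. So instead I would fix $(M_e,\psi_e)$, note that $v_0$ is now matched to $w_e \notin S$, and set $S' = (S \setminus \{v_0\}) \cup \{w_e\}$; since $w_e$'s old partner in $M_e$ is $v_0$'s old $M$-partner $u_0 \in V \setminus S$ is now... this requires checking that $S'$ is again a maximal red-independent set of size $n$. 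It is: swapping $v_0$ for its new matched partner $w_e$ along the red edge structure preserves red-independence and maximality, because $w_e$'s red partner is now outside $S'$. Then I would argue, exactly as in Theorem~\ref{thmHamOut} using \eqref{eq:PMswitch} and the fact that removing one vertex from $S$ drops $d^\times$ by at most $1$, that $d_{M_e,\psi_e}^\times(S') \geq d - 1$. Applying the induction hypothesis to $(M_e,\psi_e)$ and $S'$ yields a set $\mathcal{T}_e$ of at least $d!$ perfect matching transversals, each of which matches $v_0$ to $w_e$ (by the analogue of Remark~\ref{remark:stayyy}, the $S'$-structure is preserved).

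Finally, the $\mathcal{T}_e$ are pairwise disjoint across the $\geq d+1$ choices of $e$, because a transversal in $\mathcal{T}_e$ matches $v_0$ precisely to $w_e$, and distinct arcs $e$ have distinct heads $w_e$. Summing, $\mathcal G$ contains at least $(d+1) \cdot d! = (d+1)!$ distinct perfect matching transversals, completing the induction.

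The main obstacle I anticipate is the bookkeeping around maximality of the shrunken set $S'$: unlike the Hamiltonian setting, where $\Omega(C,\phi,S)$ is defined for arbitrary red-independent $S$, here $\Omega(M,\phi,S)$ and $d^\times_{M,\phi}(S)$ are only defined for \emph{maximal} red-independent sets, so I must verify that the natural "swap $v_0$ for $w_e$" operation keeps the set maximal and of size $n$, and that the relevant inherited transversals genuinely lie in $\Omega(M_e,\psi_e,S')$. A secondary subtlety, easily handled, is confirming that $d^\times$ drops by at most one when we perform this swap; this should follow from an argument parallel to the containment inclusions $N^+_{B(H_{C',\psi})}(x) \cap S \subseteq (N^+_{B(H_{C',\psi})}(x) \cap S') \cup \{s_1\}$ used in the proof of Theorem~\ref{thmHamOut}, combined with \eqref{eq:PMswitch}.
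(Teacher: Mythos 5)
Your overall skeleton (find a $v_0$ whose every outgoing blue arc lies in some transversal of $\Omega(M,\phi,S)$, branch over the $\geq d+1$ arcs, recurse, and observe the branches are disjoint because $v_0$'s matched partner differs) matches the paper, and the contradiction argument establishing the existence of $v_0$ via Lemma~\ref{lemmaPMcond} is correct and identical in spirit. But the inductive step as you set it up has a real gap, and you have in fact put your finger on exactly where it is without resolving it.

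The problem is the vertex $w_e$ you add to $S$ to form $S' = (S\setminus\{v_0\})\cup\{w_e\}$. You are right that $S'$ is again a maximal red-independent set (of size $n$) with respect to $(M_e,\psi_e)$. However, to invoke the induction hypothesis you need $d^\times_{M_e,\psi_e}(S') \geq d-1$, which is a statement about \emph{every} vertex of $S'$, including the new one. For the vertices $v \in S\setminus\{v_0\}$ your bookkeeping works: $|N^+_{B}(v)\setminus S'| \geq |N^+_{B}(v)\setminus S| - 1 \geq d-1$, using \eqref{eq:PMswitch}. But for $w_e$ itself, the hypothesis $d^\times_{M,\phi}(S) \geq d$ gives you nothing, because $w_e \notin S$; the minimum defining $d^\times_{M,\phi}(S)$ is taken only over vertices of $S$. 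There is no reason $|N^+_{B(H_{M_e,\psi_e})}(w_e)\setminus S'|$ should be large --- all of $w_e$'s blue out-neighbors could lie inside $S'$ --- and Theorem~\ref{thmOutdeg} carries no degree hypothesis that would rescue this. So the claimed inequality $d^\times_{M_e,\psi_e}(S')\geq d-1$ can fail, and the induction does not close.

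The paper avoids this by not keeping the ambient instance fixed. Instead of swapping $w_e$ into $S$, it deletes both $v_0$ and $w_e$ from the graph and discards the single color $c=\psi_e(v_0w_e)$, passing to the family $\mathcal{G}^\ast$ of induced subgraphs on $V\setminus\{v_0,w_e\}$, the matching $M^\ast = M_e\setminus\{v_0w_e\}$, and $S^\ast = S\setminus\{v_0\}$. Now $S^\ast$ is maximal red-independent in the \emph{smaller} universe (since $w_e$ is gone), no new vertex enters $S^\ast$, and for each $v\in S^\ast$ the blue out-neighborhood loses at most the single vertex $w_e$, giving $d^\times_{M^\ast,\psi^\ast}(S^\ast)\geq d-1$ cleanly. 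This is why the paper's induction is on $n$, the number of matching edges, rather than on $d$: the reduction genuinely shrinks the instance. Each transversal of $\mathcal{G}^\ast$ then lifts to one of $\mathcal{G}$ by re-attaching $v_0w_e$ with color $c$, and the branches remain disjoint exactly as you argue. If you replace your swap step with this deletion step, the rest of your write-up goes through.
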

\begin{proof}
We prove this result by induction on $n = |M|$. Notice that the case $n=1$ trivially holds.

In the case $n \geq 2$, we claim that there exists a vertex $v_0 \in S$ such every blue arc $v_0w$, where $w \not \in S$, belongs to a perfect matching transversal in $\Omega(M,\phi,S)$. Indeed, suppose for the sake of contradiction that for each vertex $x \in S$, there exists a blue arc $xy$ in $H=H_{M,\phi}$ such that $y \not \in S$ and $xy$ does not belong to a perfect matching transversal of $\Omega(M,\phi,S)$. Let $A$ denote a set consisting of one such arc $xy$ for each vertex $x \in S$ and let $J$ be the RB-digraph of $(M,\phi)$ satisfying $B(J) = A$.  By Lemma \ref{lemmaPMcond} there is a perfect matching transveral $(M',\psi) \in \Omega(M,\phi,S)$, distinct from $(M,\phi)$, which uses only edges in $\overline{J}$. But the only edges in $E(\overline{J}) \setminus M$ correspond to arcs in $A$, so $M'$ must contain at least one edge in $\overline{A}$. This contradicts the fact that $A$ is comprised of edges not in any perfect matching transversal, proving the claim.

This allows us to consider a fixed $v_0 \in S$ such every blue arc $v_0w$, where $w \not \in S$, belongs to a perfect matching transversal.
Given a blue out-neighbor of $v_0$, say $w_0 \not \in S$, let $(M',\psi)$ denote the perfect matching transversal that contains the edge $v_0w_0$. Let $c = \psi(v_0w_0)$, let $G^\ast$ denote the induced subgraph of $G$ on $V \setminus \{v_0,w_0\}$, and let $\mathcal{G}^\ast$ denote the corresponding corresponding family of induced subgraphs of each $G_i$ for $i \in [n]\setminus\{c\}$. Moreover, let $S^\ast = S \setminus \{v_0\}$, let $M^\ast = M'\setminus \{v_0w_0\}$, and let $\psi^\ast = \psi \vert_{M^\ast}$
. Observe that $(M^\ast,\psi^\ast)$ is a perfect matching transversal, $S^\ast$ is a maximal red-independent set with respect to $(M^\ast,\psi^\ast)$, and letting $H^\ast = H_{\mathcal{G}^\ast,M^\ast,\phi^\ast}$ it follows from \eqref{eq:PMswitch} that $|N^+_{B(H^\ast)}(v) \setminus S^\ast| \geq |N^+_{B(H)}(v) \setminus S| -1$ for each $v \in S^\ast$. Thus, $d^\times_{M^\ast,\phi^\ast}(S^\ast) \geq d-1$, and applying induction we see that $\mathcal{G}^\ast$ has at least $d!$ distinct perfect matching transversals. Moreover, each perfect matching transversal of $\mathcal{G}^\ast$, say $(L,\sigma)$, corresponds to a distinct perfect matching transversal of $\mathcal{G}$ obtained by adding $v_0w_0$ to $L$ and setting $\sigma(v_0w_0) = c$. Thus, we have a set $\mathcal{T}_{w_0}$ of at least $d!$ perfect matching transversals of $\mathcal{G}$ containing the edge $v_0w_0$.

Now, because $d^\times_{M,\phi}(S) \geq d$, the number of edges in $G_{\phi(v_0w_0)}$ joining $v_0$ to $V \setminus S$ is at least $d+1$. Thus, we have at least $d+1$ sets $\mathcal{T}_w$ of distinct perfect matching transversals and, by considering the edge incident to $v_0$ in each matching, we see that $\mathcal{T}_w \cap \mathcal{T}_{w'}=\emptyset$ whenever $w \neq w'$. Thus gives us a total of $(d+1)\cdot d!=(d+1)!$ distinct perfect matching transversals of $\mathcal{G}$. 
\end{proof}

\subsection{Proofs of Theorem \ref{thmPM} and Theorem \ref{thmDiracPM}}

 Our proof of Theorem \ref{thmPM} uses probabilistic techniques similar to those used in the proof of Theorem \ref{thmMain} via Theorem \ref{LLLresult}. We begin with a probabilistic existence theorem, then prove a more technical, but stronger, theorem from which Theorem \ref{thmPM} follows as a corollary. Unlike in Section \ref{secHam},  Theorem \ref{thmDiracPM} will in fact also follow as a corollary.

\begin{theorem}
Consider $\alpha \in (0,1)$ and $r,m \in \mathbb{N}$ such that $r \geq \frac{4(1 + \log(2m^2 - 2m + 1))}{(1-\alpha)^2}$. If $\Delta(G)=m$ and each $v \in V$ satisfies $N^+_{B(H_{M,\phi})}(v) \geq r$, then there is a maximal red-independent set $S$ such that $d_{M,\phi}^\times(S) \geq \frac{\alpha r}{2}$. 
\label{thmLLLPM}
\end{theorem}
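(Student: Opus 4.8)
The plan is to mirror the structure of the proof of Theorem~\ref{LLLresult}, but working with a single color class (blue) instead of two and with a simpler notion of "goodness." First I would construct the candidate set $S$ in two stages, as in Lemma~\ref{lemmaInS}: include each vertex of $V$ independently with some probability $p$, and then, since the set so obtained need not be red-independent, handle the red edges. Because $R(H_{M,\phi})$ is a perfect matching, for each red edge $x_iy_i$ with both endpoints chosen we can simply keep one endpoint (say the lower-indexed one), which \emph{does} keep $S$ red-independent, but $S$ must ultimately be \emph{maximal} red-independent, so after the deletion step I would greedily add back vertices until no red edge is disjoint from $S$ — equivalently, choose $S$ so that $|S \cap \{x_i,y_i\}| = 1$ for every $i$. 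A cleaner route: choose, for each $i \in [n]$ independently, exactly one of $x_i,y_i$ to put in $S$, each with probability $\tfrac12$; this automatically yields a maximal red-independent set, and it avoids the Markov-inequality clean-up entirely.

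Next I would bound, for each $v \in S$, the probability of the bad event $A_v$ that $|N^+_{B(H_{M,\phi})}(v)\setminus S| < \tfrac{\alpha r}{2}$. Note $v$ has at least $r$ blue out-neighbors; since red is a matching, at most one blue out-neighbor of $v$ — namely the red partner of $v$, which lies outside $S$ because $S$ is red-independent — is forced, and each other blue out-neighbor $w$ lies outside $S$ with probability exactly $\tfrac12$, independently across distinct red-partner-classes (a blue out-neighbor $w=x_j$ or $w=y_j$ is outside $S$ iff the coin for index $j$ went the other way). So $|N^+_{B(H)}(v)\setminus S|$ stochastically dominates a sum of at least $r$ (pairwise-independent) indicators each of mean $\tfrac12$, with expectation $\mu \ge r/2$. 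Setting $\delta = 1-\alpha$ and applying the Chernoff bound~\eqref{eq:chernoff1} gives $\Pr(A_v) \le \left(\tfrac{e^{-\delta}}{(1-\delta)^{1-\delta}}\right)^{\mu}$; with a bit of manipulation this is at most $e^{-\tfrac{(1-\alpha)^2}{4}\, r}$ using $\mu \ge r/2$ and the standard estimate $\tfrac{e^{-\delta}}{(1-\delta)^{1-\delta}} \le e^{-\delta^2/2}$. I should double-check that $\tfrac{\alpha r}{2} \le (1-\delta)\mu$ so that the bound is applied in the correct regime; since $(1-\delta)\mu = \alpha\mu \ge \alpha r/2$ this is exactly tight, which is presumably why the theorem states $d^\times \ge \tfrac{\alpha r}{2}$.

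Then I would apply the asymmetric Lov\'asz Local Lemma (Theorem~\ref{LLL}). The dependency structure is the reason $\Delta(G)=m$ enters: each event $A_v$ is determined by the coin flips at the indices appearing among the blue out-neighbors of $v$, so $A_v$ is independent of all $A_u$ whose blue out-neighborhoods touch a disjoint set of indices. Counting as in Theorem~\ref{LLLresult}: $v$ has at most $m$ blue out-neighbors (maximum degree $m$), hence at most $m$ relevant indices; a vertex $u$ shares a relevant index with $v$ only if $u$ lies within "blue distance two" in an appropriate sense, and a bookkeeping argument bounds the number of such $u$ by something like $2m^2-2m+1$ — which is exactly the quantity appearing in the hypothesis $r \ge \tfrac{4(1+\log(2m^2-2m+1))}{(1-\alpha)^2}$. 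With a uniform weight $x$ assigned to every event, the LLL condition becomes $e^{-(1-\alpha)^2 r/4} \le x(1-x)^{2m^2-2m}$; choosing $x = \tfrac{1}{2m^2-2m+1}$ (so that $(1-x)^{2m^2-2m} \ge e^{-1}$, roughly), the condition reduces to $e^{-(1-\alpha)^2 r /4} \le \tfrac{1}{e(2m^2-2m+1)}$, i.e. to $\tfrac{(1-\alpha)^2 r}{4} \ge 1 + \log(2m^2-2m+1)$, which is precisely the hypothesis. The LLL then guarantees that with positive probability no $A_v$ occurs, so the resulting maximal red-independent set $S$ satisfies $d^\times_{M,\phi}(S) \ge \tfrac{\alpha r}{2}$, as desired.

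The main obstacle I anticipate is the dependency count: pinning down precisely why the number of events $A_u$ sharing a variable with $A_v$ is at most $2m^2-2m+1$ requires care about what "blue distance" means for a digraph whose edges come in two orientations, and about the fact that a single index $j$ can be hit by $v$ through either $x_j$ or $y_j$. Getting this constant exactly right (rather than, say, $2m^2$ or $4m^2$) is what makes the stated hypothesis on $r$ tight, and it is the step where a sloppy estimate would force a weaker bound; everything else is a routine Chernoff-plus-LLL computation of the type already carried out in the proof of Theorem~\ref{LLLresult}.
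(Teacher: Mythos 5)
Your overall plan is the paper's: choose, for each $i$, exactly one of $x_i,y_i$ to put in $S$ uniformly at random (the paper goes directly to this ``cleaner route''), bound the bad events by Chernoff, and apply the asymmetric Lov\'asz Local Lemma with the uniform weight $x=\tfrac{1}{2m^2-2m+1}$; the dependency structure is as you guess (two $B_i$'s are codependent iff there is a path of length at most two between $\{x_i,y_i\}$ and $\{x_j,y_j\}$, giving at most $2m^2-2m$ neighbours). One side remark: the red partner of $v$ is never a blue out-neighbour of $v$ (Definition~\ref{defRBgraph} excludes $j=i$), so the sentence about the red partner being ``forced'' is a red herring, though harmless.

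The genuine gap is in your Chernoff step. You claim that $|N^+_{B(H)}(v)\setminus S|$ ``stochastically dominates a sum of at least $r$ pairwise-independent indicators each of mean $\tfrac12$,'' with the indicators corresponding to individual blue out-neighbours of $v$. This is not correct when some pair $\{x_j,y_j\}$ has \emph{both} vertices among the blue out-neighbours of $v$: in that case the indicators for $x_j\notin S$ and $y_j\notin S$ are perfectly anti-correlated (exactly one holds), not independent, and that pair contributes \emph{deterministically} one vertex to $N^+(v)\setminus S$. As a result $|N^+_{B(H)}(v)\setminus S|$ has the form $q+\mathrm{Bin}(k,\tfrac12)$ with $k\geq r-2q$, where $q$ is the number of such ``doubly-hit'' pairs, and this distribution does \emph{not} stochastically dominate $\mathrm{Bin}(r,\tfrac12)$ in general (e.g.\ for $q=r/2$ it is the constant $r/2$, which is $<r$ with probability $1$ while $\mathrm{Bin}(r,\tfrac12)<r$ with probability $1-2^{-r}$). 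The paper confronts this head-on: it introduces the set $Q_i$ of doubly-hit pairs and the parameter $q_i=|Q_i|$, rewrites the bad event as $s_i<\tfrac12\alpha r-q_i$ with $s_i$ binomial, applies Chernoff with a $q_i$-dependent relative deviation $\epsilon_i$, and then proves by differentiating the resulting bound $\rho(q_i)$ that the estimate is worst at $q_i=0$, yielding $\exp(-\tfrac{(1-\alpha)^2 r}{4})$ uniformly. You would need to replace the dominance claim with this (or an equivalent) argument to make the Chernoff bound rigorous; once that is done, the rest of your outline (LLL set-up, weight choice, and the final rearrangement to the hypothesis on $r$) matches the paper.
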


\begin{proof}
By permuting indices, we may assume that $\mathcal{G}$ is naturally indexed with respect to $M$. We will randomly construct $S$ as follows: Independently for each edge $x_i y_i\in M$, we pick one of $x_i$ or $y_i$ uniformly at random to add to $S$. We claim that, after completing this process, there is a positive probability that in $H=H_{M,\phi}$, each vertex $v \in S$ has at least one blue out-neighbor $w \not \in S$.
For each $i$, 
we let $\{z_i\} = \{x_i,y_i\} \cap S$.
Then, let $B_i$ denote the bad event $|N^+(z_i) \setminus S| < \frac{1}{2} \alpha r$.
We will separately consider the edges $x_jy_j$ such that $\{x_j,y_j\} \subseteq N^+_{B(H)}(z_i)$, as in this case $\{x_j,y_j\}$ contributes exactly one vertex to $N^+(z_i) \setminus S$ no matter which of $x_j$ or $y_j$ we add to $S$. To this end, we define
\[Q_i = \{x_jy_j \in M \,\vert\, \{x_j,y_j\}\subseteq N_{B(H)}^+(z_i)\} ,\]
set $q_i = |Q_i|$,
and define 
$s_i = |N^+_{B(H)}(z_i) \setminus (Q_i \cup S)|.$
Then,
$|N^+_{B(H)}(z_i) \setminus S| = q_i + s_i$. 
Therefore, $B_i$ occurs if and only if  $s_i < \frac{1}{2}\alpha r - q_i$. 
So, in determining an upper bound for the probability that $B_i$ occurs, we may assume without loss of generality that $q_i < \frac{1}{2}\alpha r$, as otherwise it is guaranteed that $B_i$ will not occur. 

Now, the value of $q_i$ depends on whether $z_i=x_i$ or $z_i=y_i$. However, we can determine an upper bound which is independent of this choice. Observe that, for a fixed choice of $z_i$, the random variable $|N^+_{B(H)}(z_i) \setminus (Q_i \cup S)|$ is binomially distributed with mean at least $\frac{1}{2}r - q_i$. 
Thus, writing
$$\frac{1}{2}\alpha r - q_{i} = \epsilon_i \left (\frac{1}{2}r - q_{i} \right ),$$
where $\epsilon_i = \frac{\frac{1}{2}\alpha r - q_{i}}{\frac{1}{2}r - q_{i}}$, the Chernoff bound \eqref{eq:chernoff2} implies that $B_i$ occurs with probability at most
$$ \rho(q_i) := \exp\left(-\frac{1}{2}(1-\epsilon_i)^2\left(\frac{r}{2}-q_i\right)\right).$$

However, one may calculate
$$\frac{\partial \rho}{\partial q_i} = - \frac{(\alpha -1)^2r^2\exp\left(-\frac{(\alpha -1)^2r^2}{4(r-2q_i)}\right)}{2(r-2q_i)^2}
< 0,$$
so that $\rho$ decreases as $q_i$ increases. This implies that $\rho(q_i)$ is maximized at $q_i=0$, in which case $\epsilon_i=\alpha$. So, no matter whether $z_i=x_i$ or $z_i=y_i$, the probability 
of the bad event $B_i$ is at most
$$\exp\left(-\frac{r(1-\alpha)^2}{4}\right).$$
Because $B_i$ and $B_j$ are codependent if and only if there is a path of length at most two between $\{x_i,y_i\}$ and $\{x_j,y_j\}$, the fact that $\Delta(G) \leq m$ tells us that each bad event $B_i$ is codependent with at most $2m^2-2m$ other bad events. By Theorem \ref{LLL}, if we assign a value $x$ to each bad event, then the following inequality guarantees that with positive probability, no bad event occurs:
$$\exp\left(-\frac{r(1-\alpha)^2}{4}\right) <  x (1 - x)^{2m^2 - 2m}.$$
We set $x = \frac{1}{2m^2 - 2m + 1}$. Then, using the fact that $(\frac{y-1}{y})^{y-1}>\frac{1}{e}$ for $y \geq 1$, we may rewrite this inequality as 
$$ \exp\left(-\frac{r(1-\alpha)^2}{4}\right)  \leq \frac{1}{e (2m^2 - 2m+1)}.$$
Taking the logarithm of both sides and simplifying, we obtain
$$r \geq \frac{4(1 + \log(2m^2 - 2m + 1))}{(1-\alpha)^2}.$$
\end{proof}

Combining Theorem \ref{thm:technicalmanyPM} with Theorem \ref{thmLLLPM}, the following result is immediate. The condition $m \geq 37$ is included to ensure that the maximum degree of $G$ is greater than the minimum degree of each $G_i$.

\begin{theorem}
Let $m \geq 37$ be an integer, let $G$ be a graph on $2n$ vertices with maximum degree $m$, and let $\mathcal{G}~=~\{G_1, \dots, G_n\}$ be a family of subgraphs of $G$ such that each $G_i \in \mathcal{G}$ has minimum degree at least $t$. If $\mathcal{G}$ contains a perfect matching transversal
and
$$t \geq \frac{4(1 + \log(2m^2 - 2m + 1))}{(1-\alpha)^2}+1$$
for some $\alpha \in (0,1)$, then $\mathcal G$ contains at least $\lfloor \frac{1}{2}\alpha(t-1) +1\rfloor !$ distinct perfect matching transversals.
\label{thm:technicalmanyPM}
\end{theorem}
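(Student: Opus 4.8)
The plan is to deduce this theorem by combining the probabilistic existence statement Theorem \ref{thmLLLPM} with the inductive counting statement Theorem \ref{thmOutdeg}, in exact parallel to the way Theorem \ref{thmMain} was obtained from Theorems \ref{LLLresult} and \ref{thmHamOut} in the Hamiltonian setting. Since both ingredients are already proved, the only genuine work is a short piece of degree bookkeeping that verifies the hypotheses of Theorem \ref{thmLLLPM}, together with some elementary arithmetic involving the floor function.

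First I would fix a perfect matching transversal $(M,\phi)$ of $\mathcal{G}$ — which exists by hypothesis — and permute the labels so that $\mathcal{G}$ is naturally indexed with respect to $M$, writing $H = H_{M,\phi}$. For each $i \in [n]$, the vertex $x_i$ has degree at least $t$ in $G_i$, and among its incident $G_i$-edges exactly one, namely the red matching edge $x_iy_i$, fails to produce a blue arc out of $x_i$ in $H$: by Definition \ref{defRBgraph}, every other edge $x_iy_j \in E(G_i)$ contributes the blue arc $x_iy_j$ to $H$. Hence $N^+_{B(H)}(x_i) \geq t-1$, and the identical argument applied to $G_i$ at $y_i$ gives $N^+_{B(H)}(y_i) \geq t-1$. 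Thus every $v \in V$ satisfies $N^+_{B(H)}(v) \geq t-1$.

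Next I would set $r = t-1$, so that the hypothesis $t \geq \frac{4(1+\log(2m^2-2m+1))}{(1-\alpha)^2}+1$ becomes precisely $r \geq \frac{4(1+\log(2m^2-2m+1))}{(1-\alpha)^2}$. Together with $\Delta(G) = m$ and the bound from the previous paragraph, this lets me invoke Theorem \ref{thmLLLPM} to obtain a maximal red-independent set $S \subseteq V$ with $d^\times_{M,\phi}(S) \geq \frac{\alpha r}{2} = \frac{\alpha(t-1)}{2}$. Since $d^\times_{M,\phi}(S)$ is a minimum of integers and hence an integer, it follows that $d^\times_{M,\phi}(S) \geq \lfloor \tfrac{\alpha(t-1)}{2}\rfloor =: d$. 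Applying Theorem \ref{thmOutdeg} to $S$ then yields at least $(d+1)!$ distinct perfect matching transversals of $\mathcal{G}$, and since $\lfloor \tfrac{\alpha(t-1)}{2}\rfloor + 1 = \lfloor \tfrac{\alpha(t-1)}{2} + 1\rfloor$, this is exactly the asserted count $\lfloor \tfrac{1}{2}\alpha(t-1)+1\rfloor!$.

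I do not expect a real obstacle here: the substance of the argument lives in Theorems \ref{thmLLLPM} and \ref{thmOutdeg}, and what remains is essentially immediate. The only points warranting care are the ``$-1$'' in the degree count (one matching edge excluded per vertex, in contrast with the ``$-2$'' of the Hamiltonian case, where two cycle edges at each vertex are excluded) and the interaction of the floor with the factorial. I would also include a one-line remark that the hypothesis $m \geq 37$ plays no role in the derivation itself; it is imposed only to guarantee that the displayed lower bound on $t$ is compatible with $t \leq \Delta(G) = m$ for a suitable $\alpha \in (0,1)$, so that the theorem is not vacuous.
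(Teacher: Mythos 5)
Your argument is exactly what the paper has in mind: Theorem~\ref{thm:technicalmanyPM} is presented as immediate from combining the probabilistic existence result (Theorem~\ref{thmLLLPM}) with the inductive counting result (Theorem~\ref{thmOutdeg}), and the degree count $N^+_{B(H)}(v)\geq t-1$, the substitution $r=t-1$, the integrality of $d^\times_{M,\phi}(S)$, and the floor identity $\lfloor x\rfloor +1=\lfloor x+1\rfloor$ are precisely the bookkeeping that the word ``immediate'' elides --- all of which you handle correctly, along with the observation that $m\geq 37$ only enforces non-vacuousness. One small caution: Definition~\ref{defRBgraph}, read literally, creates blue arcs only for $G_i$-edges joining an $x$-labelled vertex to a $y$-labelled vertex, so the bound $N^+_{B(H)}(x_i)\geq t-1$ is not strictly forced if $G_i$ has edges of the form $x_ix_j$; however, the paper's own proof of Theorem~\ref{thmLLLPM} treats the case $\{x_j,y_j\}\subseteq N^+_{B(H)}(z_i)$, which is impossible under the literal reading, so the intended definition clearly assigns a blue arc out of $x_i$ for \emph{every} non-matching $G_i$-edge at $x_i$, and under that intended reading your count is exactly right.
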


Observe that Theorem \ref{thmPM} follows from Theorem \ref{thm:technicalmanyPM} by plugging in $c = \frac{1}{10}$, letting $t$ be as small as possible, and adjusting the lower bound on $m$ to ensure the hypotheses are not vacuous.

We now turn our attention to the case where $G = K_{2n}$ and each graph $G_i \in \mathcal{G}$ has minimum degree at least $cn$ for some $c \geq 1$. Theorem 2 of \cite{JoosKim2020} states that, in this setting, $\mathcal{G}$ has a perfect matching transversal, say $(M,\phi)$. To obtain Theorem \ref{thmDiracPM}, let $\alpha = \left(1+\frac{\epsilon}{2}\right)^{-1}$ and observe that for sufficiently large $n$ we have $n \geq k_1 \log(k_2n^2)$ for any constants $k_1$ and $k_2$. We may therefore apply Theorem \ref{thm:technicalmanyPM} to obtain at least $\left\lfloor \frac{cn}{2+\epsilon} \right\rfloor !$ perfect matching transversals.

\section{Conclusion}
\label{secConclusion}

While we have obtained factorial lower bounds for the number of Hamiltonian transversals in certain graph families, it is unclear whether these bounds are tight. In \cite{Cuckler}, Cuckler and Kahn show that a graph on $n$ vertices with minimum degree at least $n/2$ must contain at least $(cn)^n$ distinct Hamiltonian cycles, with $c \approx \frac{1}{2e}$. Applying Stirling's approximation to Theorem \ref{thmDiracHam}, we see that in a graph family $\mathcal{G}$ on $n$ vertices in which each graph has minimum degree at least $n/2$, we can find $(\frac{c}{e}n)^{cn}$ Hamiltonian transversals, with $c \geq \frac{1}{68}$. Perhaps the most interesting open problem raised by the above work is whether it is possible to remove the constant $c<1$ from the exponent to obtain a lower bound for the number of  Hamiltonian transversals similar to that of Cuckler and Kahn.

We can also ask a similar question concerning families in which all graphs are regular. In \cite{Haythorpe} Haythorpe conjectures that for sufficiently large $k$, a Hamiltonian $k$-regular graph on $n$ vertices contains $\Omega(k^n)$ distinct Hamiltonian cycles. It would be also be interesting to try to prove or disprove a similar lower bound for the number of distinct Hamiltonian transversals over a $k$-regular graph family with a Hamiltonian transversal. Notice that this would be a significant improvement upon the bound in Theorem \ref{thmMain}.

In general, it would be interesting to find upper bounds for the number of possible Hamiltonian transversals in the types of graph families we have studied. We imagine that these results would come in the form of an explicit graph family which satisfies, say, the hypotheses of Theorem \ref{thmMain} and an upper bound on the number of Hamiltonian transversals in this family. Similar results for perfect matching transversals would also be very interesting, as would improvements on our lower bounds.

Using ideas similar to those presented in Section \ref{sectionRYB}, along with the local lemma, it is possible to prove a version of Lemma \ref{lemmaIndDom} in which a fixed set of edges in guaranteed to appear in both the original and the new Hamiltonian transversal. This could be used to improve upon Theorem \ref{thmMain}, but it seems that for such an improvement one would need to also ensure that certain edges do not appear in the second Hamiltonian transversal. It is also possible to prove a similar extension of Lemma \ref{lemmaPMcond}, and if it were possible to ensure that certain edges do not appear in the second perfect matching transversal one could likely improve upon the lower bound of Theorem \ref{thmPM}.

More generally, it would be interesting to adapt our combinatorial techniques to similar problems. Of course, these techniques were inspired by a paper of Thomassen  \cite{Thomassen} which considers a graph $G$ whose edges are colored red and green. Thomassen gives a combinatorial interpretation for a red-independent and green-dominating vertex set in terms of the number of Hamiltonian cycles in $G$. In Lemma \ref{lemmaIndDom}, we consider a graph $H$ whose edges are colored red, yellow, and blue, and we give a combinatorial interpretation for a vertex subset that is red-independent, yellow-dominating, and blue-dominating in terms the number of of Hamiltonian transversals in the graph family $\mathcal{G}$ from which $H$ was obtained. Perhaps by considering graphs whose edges are colored with many colors and vertex subsets that are red-independent and dominating in many colors, more combinatorial interpretations of this flavor can be found.

\raggedright
\bibliographystyle{abbrv}
\bibliography{Refs}

\end{document}